\numberwithin{equation}{section}
\newtheorem{theorem}{Theorem}[section]
\newtheorem{proposition}[theorem]{Proposition}
\newtheorem{lemma}[theorem]{Lemma}
\theoremstyle{definition}
\newcommand{\la}{\lambda}
\newcommand{\R}{\mathbb{R}}
\begin{document}

\title[  Non-degeneracy of multi-bump  solutions
 ]
{ Non-degeneracy of multi-bump  solutions for  the prescribed scalar curvature equations and applications
 }

 \author{Yuxia Guo, Monica Musso, Shuangjie Peng and Shusen Yan
}

\address{}

\email{}

\address{Department of  Mathematics, Tsinghua University, Beijing, 100084, P.R.China}

\email{yguo@mail.tsinghua.edu.cn   }
%\thanks{Supported by NSFC(11771235)}

\address{Department of Mathematical Sciences University of Bath, Bath BA2 7AY, United
Kingdom}

\email{m.musso@bath.ac.uk}

\address{ School of Mathematics and  Statistics, Central China Normal University, Wuhan, P.R. China}

\email{ sjpeng@mail.ccnu.edu.cn}

\address{ School of Mathematics and  Statistics, Central China Normal University, Wuhan, P.R. China}

\email{ syan@mail.ccnu.edu.cn}

\begin{abstract}

We  consider the following prescribed scalar curvature equations in $\R^N$
\begin{equation}
	\label{eq}
	- \Delta u =K(|y|)u^{2^*-1},\quad u>0 \ \   \mbox{in} \  \R^N, \ \ \
	u \in D^{1, 2}(\R^N),
\end{equation}
where $K(r)$ is a  positive function, $2^*=\frac{2N}{N-2}$.
We modify the methods and improve the results in \cite{GMPY}.

\end{abstract}

\maketitle

\section{Introduction}

It is well known that by using the stereo-graphic projection, the
  prescribed scalar curvature problem on $\mathbb S^N$ can be changed to the following equation:
\begin{equation}\label{1-13-5}
- \Delta u =K(y)u^{2^*-1},\quad u>0 \ \ \   \mbox{in} \  \R^N, \ \
 u \in D^{1, 2}(\R^N).
\end{equation}
Here $2^* = {2N \over N-2}$ and $N\geq 3$.
In the last three decades,  enormous efforts have been devoted to the study of \eqref{1-13-5}.  We refer the
 readers to
 \cite{AAP}--\cite{CL2}, \cite{L1}--\cite{Y} and the references therein.

If  $K(y)$ is radial, infinitely many non-radial solutions are constructed in \cite{WY} for
\begin{equation}
\label{1.4}
 - \Delta u =K(|y|)u^{2^*-1},\quad u>0 \ \   \mbox{in} \  \R^N, \ \ \
 u \in D^{1, 2}(\R^N),
\end{equation}
under the following assumption on $K(r)$:

(K):  There  are  $r_0>0$  and $c_0>0$,
such that
\begin{equation}\label{V}
K(r)= K(r_0) - c_0(r- r_0)^2 +  O(|r-r_0|^3),  \quad r\in (r_0-\delta, r_0+\delta).
\end{equation}
  Without loss of generality, we may assume that $
K(r_0)=1.$
Let us briefly discuss the main results in \cite{WY}.

It is well known ( see \cite{Au,Ta} ) that all solutions to the following problem
\begin{equation}\label{10-18-4}
- \Delta u =u^{2^*-1},\quad u>0 \ \   \mbox{in} \  \R^N, \ \ \
 u \in D^{1, 2}(\R^N)
\end{equation}
are given by
\begin{equation}\label{bub}
U_{x,\mu}(y)= \frac{c_N \mu^{\frac{N-2}2}}{(1+\mu^2 |y-x|^2)^{\frac{N-2}2}}, \quad x\in\mathbb R^N,\; \mu>0,
\end{equation}
where $c_N=\bigl( N (N-2) \bigr)^{\frac{N-2}4}$.
Let $k$ be an integer number and consider the vertices of a regular polygon with $k$ edges in the $(y_1, y_2)$-plane given by
\[
x_j=\Bigl(r \cos\frac{2(j-1)\pi}k, r\sin\frac{2(j-1)\pi}k,0\Bigr),\quad j=1,\cdots,k,
\]
where $0$ denotes the zero vector in $\R^{N-2}$ and   $r\in [r_0-\delta, r_0+\delta]$.
For any point $y \in \R^N$, we set $y=(y',y'')$, $y'\in \R^2$, $y''\in \R^{N-2}$.  Define
\[
\begin{split}
H_s=\Bigl\{ u: \, &  u\;\text{is even in} \;y_h, h=2,\cdots,N,\\
& u(r\cos\theta , r\sin\theta, y'')=
u(r\cos(\theta+\frac{2\pi j}k) , r\sin(\theta+\frac{2\pi j}k), y'')
\Bigr\}
\end{split}
\]
and
\[
W_{r,\mu}(y)=\sum_{j=1}^k U_{x_j, \mu}(y),
\]
where $\mu>0$ is large.
The result obtained in \cite{WY} states the following:

{\bf Theorem~A.}   {\it Suppose that $K(r)$ satisfies (K)  and $N\ge 5$. Then there is an
integer $k_0>0$, such that for any integer  $k\ge k_0$, \eqref{1.4}
has a solution $u_k$ of the form
\[
u_k \approx W_{r_k, \mu_k}(y),
\]
where     $ |r_k-r_0|= O\bigl(\frac1{\mu_k^{1+\sigma}}\bigr) $, $\mu_k\sim k^{\frac{N-2}{N-4}}$.
}

The solutions predicted in Theorem A are obtained by gluing together a very large number of basic profiles \eqref{bub} centered at the vertices of a regular polygon with a large number of edges, and scaled with a parameter $\mu$ that, as $k$ is taken large, diverges to $+\infty$.
 The main term $ W_{r_k, \mu_k}(y)$ of the solution $u_k$ depends on $y''$ radially. To obtain a solution which depends on $y''$ radially,
 we can carry out the reduction procedure in the following space
\[
\begin{split}
 D^{1, 2}(\R^N)\cap \Bigl\{ u: \, &  u\;\text{is even in} \;y_2;\;  u(y', y'')= u(y', |y''|),\\
& u(r\cos\theta , r\sin\theta, y'')=
u(r\cos(\theta+\frac{2\pi j}k) , r\sin(\theta+\frac{2\pi j}k), y'')
\Bigr\},
\end{split}
\]
to ensure that the error term   also  depends on $y''$ radially.

A direct consequence of the proof in \cite{DLY}, together with the estimates in section~2,  is that
the solution satisfying the conditions in Theorem~A is unique.  In particular, such solution
must be radial in $y''$-variable.

Of course, we can also find a solution with $n$-bubbles, whose centers lie  near  the  circle $|y|=r_0$ in the $(y_3, y_4)$-plane.
The question we want to discuss in this paper is whether these two solutions can be glued together to give rise to a new type of solutions.
In other words, we are interested in finding a new solution to \eqref{1-13-5}
whose shape is, at main order,
\begin{equation}\label{86-3-4}
u\approx \sum_{j=1}^k U_{x_j, \mu} +\sum_{j=1}^n U_{p_j, \lambda},
\end{equation}
for $k$ and $n$ large integers,
where
\[
x_j=\Bigl(r \cos\frac{2(j-1)\pi}k, r\sin\frac{2(j-1)\pi}k,0\Bigr),\quad j=1,\cdots,k,
\]
\[
p_j=\Bigl(0,0, t \cos\frac{2(j-1)\pi}n, t\sin\frac{2(j-1)\pi}n,0,\cdots,0\Bigr),\quad j=1,\cdots,n,
\]
and $r$ and $t$ are close to $r_0$. Equation \eqref{1-13-5} is the Euler-Lagrange equation associated to the energy functional
\[
I(u)=\frac12\int_{\R^N} |\nabla u|^2-\frac1{2^*}\int_{\R^N} K(y)|u|^{2^*}.
\]
Thus, roughly speaking, a function of the form \eqref{86-3-4} is an approximate solution to \eqref{1-13-5} provided that the radii $r$, $t$ and the parameters $\mu$ and $\la$ are such that
$$I' ( \sum_{j=1}^k U_{x_j, \mu} +\sum_{j=1}^n U_{p_j, \lambda} ) \sim 0.$$
Having in mind that $\mu , \la \to \infty$, and $r , t \sim r_0$, one easily gets that
\begin{equation}\label{88-3-4}
\begin{split}
&I\Bigl( \sum_{j=1}^k U_{x_j,\mu} +\sum_{j=1}^n U_{p_j,\lambda}  \Bigr)\\
= &(k+n) A +k\Bigl( \frac{B_1}{\mu^2} + \frac{B_2}{\mu^2} (\mu r_0 -r)^2-  \frac{B_3 k^{N-2}}{\mu^{N-2}}\Bigr)+ k  O\Bigl( \frac{B_1}{\mu^{2+\sigma}} + \frac{B_2}{\mu^2} (\mu r_0 -r)^3\Bigr) \\
  &+n\Bigl( \frac{B_1}{\lambda^2} + \frac{B_2}{\lambda^2} (\lambda r_0 -t)^2-  \frac{B_3 n^{N-2}}{\lambda^{N-2}}\Bigr)+ n  O\Bigl( \frac{B_1}{\lambda^{2+\sigma}} + \frac{B_2}{\lambda^2} (\lambda r_0 -t)^3\Bigr),
  \end{split}
\end{equation}
 where $A = \frac12\int_{\mathbb R^N}  |\nabla U_{0, 1}|^2 -\frac1{2^*}\int_{\mathbb R^N} U_{0, 1}^{2^*}$, $B_1$, $B_2$  and $B_3$ are some positive constants,
 and $\sigma>0$ is a small constant.  Observe now that, if  $n>> k$, then the two terms in \eqref{88-3-4} are of different order, which makes it complicated to find a critical point for $I$. Therefore, it is very difficult to use a reduction
 argument to construct solutions of the form \eqref{86-3-4}.
 In fact, this approach has been successfully used in \cite{mmw} (see also \cite{dmpp} and \cite{mw}) to construct finite energy sign-changing solutions in the case $K(y) \equiv 1$, namely
 $$
- \Delta u =|u|^{2^*-2} u , \ \   \mbox{in} \  \R^N,
 u \in D^{1, 2}(\R^N).
$$

 In this paper, we propose an alternative approach and we consider the above problem from completely different point of view.  Recall that our aim is to  glue   $n$-bubbles, whose centers lie in the  circle $|y|=r_0$ in the $(y_3, y_4)$-plane to
 the $k$-bubbling solution $u_k$ described in Theorem A. The linear operator for such problem is
 \[
 Q_n \xi =-\Delta \xi -(2^*-1)K(y)\Bigl( u_k+\sum_{j=1}^n U_{p_j,\lambda}\Bigr)^{2^*-2}\xi.
 \]
 Away from the points $p_j$,  the operator $Q_n$ can be approximated by the
linearized operator around $u_k$, defined by:
 \begin{equation}\label{2-22-12}
 L_k \xi= -\Delta \xi   -(2^*-1)K(y) u_k^{2^*-2}\xi.
 \end{equation}
The new approach we propose is to build the solution with $k$-bubbles in the $(y_1, y_2)$-plane and $n$-bubbles in the $(y_3, y_4)$-plane
as a perturbation of the solution with the $k$-bubbles in the $(y_1, y_2)$-plane. In order to do so, an essential step is to
understand the spectral properties of the liner operator $L_k$ and study its
 invertibility  in some suitable space.

 The main result of this paper is the following.
\begin{theorem}\label{th1-29-3}
Assume $N\ge 5$.
 Suppose that $K(y)$ satisfies $(K)$.
  Let $\xi\in H_s\cap D^{1,2}(\mathbb R^N)$ be a solution of $L_k \xi = 0$.  Then $\xi=0$.

\end{theorem}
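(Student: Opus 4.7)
Argue by contradiction. If the theorem fails, there exist $k_n \to \infty$ and non-zero $\xi_n \in H_s \cap D^{1,2}(\R^N)$ with $L_{k_n} \xi_n = 0$. Work with the weighted norm $\|\xi\|_* = \sup_y |\xi(y)|/\sum_{j=1}^{k_n} \mu_n^{(N-2)/2}(1+\mu_n|y-x_j|)^{-\tau}$ for a suitable $\tau\in((N-2)/2,N-2)$, and normalize $\|\xi_n\|_*=1$. Blow up near $x_1$ via $\hat\xi_n(z) = \mu_n^{-(N-2)/2}\xi_n(x_1+z/\mu_n)$. Using Theorem~A and the estimates of Section~2 the rescaled equation reads $-\Delta \hat\xi_n = (2^*-1) U_{0,1}^{2^*-2}\hat\xi_n + o(1)$ on compact sets, so along a subsequence $\hat\xi_n \to \hat\xi$ in $C^2_{\mathrm{loc}}(\R^N)$ for some bounded $\hat\xi \in D^{1,2}(\R^N)$ in the kernel of $-\Delta-(2^*-1)U_{0,1}^{2^*-2}$, classically spanned by $\phi_0 = \partial_\mu U_{0,\mu}|_{\mu=1}$ and $\phi_i = \partial_{z_i} U_{0,1}$ for $i=1,\dots,N$. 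Membership in $H_s$ forces evenness of $\hat\xi_n$ in $z_2,\dots,z_N$ (because $x_1$ lies on the $y_1$-axis), eliminating $\phi_2,\dots,\phi_N$. Therefore $\hat\xi = \alpha \phi_0 + \beta\phi_1$, and by the $k_n$-fold rotational symmetry the same constants $(\alpha,\beta)$ govern the blow-up near every $x_j$.

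\textbf{Pohozaev identities.} To prove $\alpha=\beta=0$, test $L_{k_n}\xi_n = 0$ against $T = \sum_j T_j$ and $S = \sum_j S_j$, where $T_j$ is the derivative of $U_{x,\mu_n}$ under radial motion of $x$ along the circle $|y|=r_n$ at $x=x_j$ and $S_j = \partial_\mu U_{x_j,\mu}|_{\mu=\mu_n}$. Both $T,S\in H_s$, and since each $T_j,S_j$ lies in the kernel of the linearization around the single bubble $U_{x_j,\mu_n}$,
\[
L_{k_n} T = (2^*-1)\sum_{j=1}^{k_n}\bigl(U_{x_j,\mu_n}^{2^*-2} - K u_{k_n}^{2^*-2}\bigr) T_j, \quad L_{k_n} S = (2^*-1)\sum_{j=1}^{k_n}\bigl(U_{x_j,\mu_n}^{2^*-2} - K u_{k_n}^{2^*-2}\bigr) S_j.
\]
Self-adjointness yields $\int \xi_n L_{k_n} T = \int \xi_n L_{k_n} S = 0$. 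Expanding $U_{x_j,\mu_n}^{2^*-2}-K u_{k_n}^{2^*-2}$ via assumption $(K)$ (which produces a $(K-1)$-contribution of order $1/\mu_n^2$ in rescaled variables), the bubble-bubble interaction from $\sum_{l\ne j} U_{x_l,\mu_n}$ (of order $k_n^{N-2}/\mu_n^{N-2}$), and the higher-order correction $u_{k_n} - \sum_j U_{x_j,\mu_n}$ from Theorem~A, the dominant part of each identity produces, after division by the common scale, a linear relation in $(\alpha,\beta)$. Because $\mu_n \sim k_n^{(N-2)/(N-4)}$, the two competing contributions survive simultaneously in the limit, giving a system
\[
M\begin{pmatrix}\alpha\\ \beta\end{pmatrix} = 0
\]
whose matrix $M$ coincides, up to a non-zero scalar, with the Hessian at $(r_n,\mu_n)$ of the $k$-bubble reduced energy appearing in \eqref{88-3-4}. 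This Hessian is diagonal at leading order, with entries proportional to $B_2>0$ and to $(4-N)B_1/\mu_n^4$, which is negative for $N\ge 5$. Hence $\det M \ne 0$ and $\alpha=\beta=0$.

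\textbf{Conclusion and main obstacle.} Once $\alpha=\beta=0$ at each $x_j$, the blow-up refinement gives $\xi_n \to 0$ in $\|\cdot\|_*$ within every bubble neighbourhood; outside the union of bubble regions $K u_{k_n}^{2^*-2}$ is sufficiently small that Green's function estimates applied to $-\Delta\xi_n = (2^*-1) K u_{k_n}^{2^*-2}\xi_n$ propagate the smallness, yielding $\|\xi_n\|_*=o(1)$ and contradicting the normalization. The principal technical obstacle is the Pohozaev step: one has to track the $(K-1)$-expansion, the inter-bubble interaction, and the correction $u_{k_n}-\sum_j U_{x_j,\mu_n}$ simultaneously to the precision needed so that the $2\times 2$ matrix $M$ is correctly assembled and identified with the non-degenerate Hessian of the reduced energy; the non-degeneracy of this Hessian is exactly the algebraic content of assumption $(K)$ that also underlies the construction in Theorem~A.
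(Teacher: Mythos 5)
Your strategy is genuinely different from the one in the paper, and it is worth comparing the two. The paper does not pair the equation $\bar L_k\xi_k=0$ against the approximate kernel functions $T_j=\partial_x U_{x_j,\mu}$, $S_j=\partial_\mu U_{x_j,\mu}$; instead it invokes two \emph{local Pohozaev identities} (Lemma~\ref{l0-14-12}), applied on small balls $B_\delta(\bar x_{k,1})$ and on $\R^N$, and reduces the analysis to boundary integrals over $\partial B_\delta(\bar x_{k,1})$. Those boundary integrals are computed via the harmonic expansions of $\bar u_k$ and $\xi_k$ in annular neighbourhoods of the bubbles (Lemma~\ref{l1-31-5-21}), which expresses them explicitly as multiples of $\sum_j G(y,\bar x_{k,j})$ plus controlled remainders. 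The payoff is that the two scalar relations one extracts (\eqref{20-31-5-21} and \eqref{36-31-5-21}) involve \emph{only} explicit quantities ($\Delta K$, $\int U^{2^*}|y|^2$, $\int U^{2^*-1}\psi_0|y|^2$, the lattice sum $\sum_j|\bar x_{k,j}-\bar x_{k,1}|^{2-N}$), and the non-vanishing of the coefficient of $b_0$ follows from the elementary arithmetic inequality $\frac{2}{2^*N}\ne\frac{N-2}{2^*N}$. No matching of an abstract $2\times 2$ matrix with a Hessian is needed. This is in fact the stated \emph{raison d'être} of the paper (see the discussion in the Introduction): the local Pohozaev identities make the precision manageable in the regime of a large number $k$ of bubbles, where the two competing scales $\mu^{-2}$ and $k^{N-2}\mu^{2-N}$ balance.

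Your scheme, pairing against $T=\sum_j T_j$ and $S=\sum_j S_j$ and invoking a ``Melnikov matrix $=$ reduced-energy Hessian'' principle, is a legitimate alternative and your leading-order arithmetic for the Hessian (the $\partial_{\mu\mu}$ entry $\sim (4-N)B_1/\mu^4<0$ for $N\ge5$) is correct. However there is a genuine gap: the claim that the $2\times 2$ system obtained from $\int\xi_n\,L_{k_n}T=\int\xi_n\,L_{k_n}S=0$ ``coincides, up to a non-zero scalar, with the Hessian at $(r_n,\mu_n)$ of the $k$-bubble reduced energy'' is a nontrivial theorem, not a formal identity. Making it rigorous requires (i) showing that the pairing $\int\xi_n\,(U_{x_j,\mu_n}^{2^*-2}-K\,u_{k_n}^{2^*-2})T_j$ is, to the required precision, governed by the blow-up limit $(\alpha,\beta)$ rather than by the tail of $\xi_n$ outside the bubble cores, and (ii) tracking the high-order correction $u_{k_n}-\sum_j U_{x_j,\mu_n}$ inside $u_{k_n}^{2^*-2}$ uniformly in $j=1,\dots,k_n$, which in this problem competes at the same order as the bubble--bubble interaction because $\mu_n\sim k_n^{(N-2)/(N-4)}$. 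For $N\ge 7$ one additionally has $2^*-2<1$, so $u_{k_n}^{2^*-2}$ cannot be Taylor-expanded around $U_{x_j,\mu_n}^{2^*-2}$ in the naive way your formula suggests. You acknowledge these as ``the main obstacle'' but do not provide a route around them; that missing step is precisely what the paper's Lemmas~\ref{l1-26-4}, \ref{l10-28-2} and \ref{l1-31-5-21} supply in its own framework, and it is not obviously lighter in yours. Until that identification is established your argument does not close, so as written this is a gap rather than a complete alternative proof.
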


A direct consequence of Theorem~\ref{th1-29-3} is the following result for the existence   of new solutions.

 \begin{theorem}\label{th11}

 Suppose that $K(r)$ satisfies the assumptions in Theorem~\ref{th1-29-3} and $N\ge 7$. Let $u_k$ be a solution in Theorem~A and $k>0$ is a large even number. Then there is an
integer $n_0>0$, depending on $k$, such that for any even number  $n\ge n_0$,  \eqref{1.4}
has a solution of the form \eqref{86-3-4} for some $t_n\to r_0$  and $\lambda_n\sim n^{\frac{N-2}{N-4}}$  .

 \end{theorem}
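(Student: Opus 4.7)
The plan is to construct the solution as a small perturbation of the two-cluster ansatz $W_{t,\lambda}=u_k+\sum_{j=1}^n U_{p_j,\lambda}$ via a Lyapunov--Schmidt reduction in a suitable symmetry subspace. Setting $u=W_{t,\lambda}+\phi$ turns \eqref{1.4} into
$$L_{t,\lambda}\phi = E_{t,\lambda}+N(\phi),$$
where $L_{t,\lambda}\phi=-\Delta\phi-(2^*-1)K(|y|)W_{t,\lambda}^{2^*-2}\phi$ is the linearized operator at $W_{t,\lambda}$, $E_{t,\lambda}$ is the error, and $N(\phi)$ collects the nonlinear terms. I would work in the closed subspace $X\subset D^{1,2}(\R^N)$ of functions that are $k$-fold rotationally symmetric in the $(y_1,y_2)$-plane, even in $y_2$, $n$-fold rotationally symmetric in the $(y_3,y_4)$-plane, even in $y_4$, and radial in $(y_5,\ldots,y_N)$. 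Since both $k$ and $n$ are even, $X$ is contained in $H_s$, and both $u_k$ and $\sum_{j=1}^n U_{p_j,\lambda}$ belong to $X$.

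The core step---and the main obstacle---is a uniform invertibility estimate for $L_{t,\lambda}$ on $X$, modulo the natural two-dimensional kernel spanned by $Z_1=\sum_j \partial_\lambda U_{p_j,\lambda}$ and $Z_2=\sum_j \partial_t U_{p_j,\lambda}$. A blow-up/contradiction argument reduces this to two regimes. Away from the concentration points $p_j$, the potential $W_{t,\lambda}^{2^*-2}$ is a small perturbation of $u_k^{2^*-2}$, so that $L_{t,\lambda}$ is a lower-order perturbation of $L_k$; triviality of the kernel of $L_k$ on $X\subset H_s$ is precisely Theorem~\ref{th1-29-3}, and this is exactly the input that makes the present approach feasible. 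Near each $p_j$, after a blow-up by $\lambda$, the limit operator is $-\Delta-(2^*-1)U_{0,1}^{2^*-2}$, whose kernel is classically known to be the span of the translation and dilation modes of $U_{0,1}$; the symmetry constraints of $X$ kill all translations orthogonal to the radial $(y_3,y_4)$-direction, while the remaining directions are exactly those projected out by $Z_1,Z_2$. Once this uniform estimate is available, the sharp decay $\|E_{t,\lambda}\|_*\le C\lambda^{-2}$ together with the superlinear control of $N(\phi)$ yield, via contraction mapping, a unique small $\phi=\phi(t,\lambda)\in X$ orthogonal to $Z_1,Z_2$, of class $C^1$ in $(t,\lambda)$.

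It then remains to find $(t_n,\lambda_n)$ for which the projection of \eqref{1.4} onto $\mathrm{span}\{Z_1,Z_2\}$ vanishes, which is equivalent to finding critical points of the reduced functional $F(t,\lambda)=I(W_{t,\lambda}+\phi(t,\lambda))$. Using $I'(u_k)=0$, Taylor-expanding around $u_k$, and combining the estimates from Section~2 with \eqref{88-3-4}, one finds
$$F(t,\lambda)=I(u_k)+nA+n\Bigl(\frac{B_1}{\lambda^2}+\frac{B_2}{\lambda^2}(\lambda r_0-t)^2-\frac{B_3 n^{N-2}}{\lambda^{N-2}}\Bigr)+\mbox{l.o.t.},$$
where the lower-order terms account for the interaction between $u_k$ and the $n$-bubble cluster as well as the quadratic-in-$\phi$ error. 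The assumption $N\ge 7$ is precisely what guarantees that these interaction and error contributions are strictly subleading with respect to the displayed principal part; this is the reason for the jump from the dimension $N\ge 5$ sufficient for non-degeneracy to $N\ge 7$ needed for the reduction. The principal part is a rescaled version of the functional analysed in \cite{WY} and admits a non-degenerate critical point with $|t_n-r_0|=O(\lambda_n^{-1-\sigma})$ and $\lambda_n\sim n^{(N-2)/(N-4)}$; this critical point persists under the subleading perturbation, yielding an actual critical point of $F$ and hence a solution of \eqref{1.4} of the desired form for every even $n\ge n_0(k)$.
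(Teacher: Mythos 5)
Your proposal is correct and follows essentially the same Lyapunov--Schmidt strategy as the paper: decompose $u=u_k+\sum_j U_{p_j,\lambda}+\phi$ in the same symmetry subspace, derive an invertibility estimate for the projected linear operator that ultimately rests on Theorem~\ref{th1-29-3} (the paper implements this through the Green's function $G_k$ of $\bar L_k$, constructed in the Appendix, rather than through a direct blow-up dichotomy, but the mathematical content is the same), and finish with the reduced functional, which you expand in the same way as \eqref{3-20-4}. One small discrepancy: you quote the error decay as $C\lambda^{-2}$, whereas the paper's Lemma~\ref{c4-l1-25-3} only establishes $\|l_n\|_{**,n}\le C\lambda^{-1-\sigma}$ for small $\sigma$ (the off-core cross term $u_k^{2^*-2}\sum_j U_{p_j,\lambda}$ prevents the faster rate); you also attribute $N\ge 7$ entirely to the reduced expansion, while in the paper it is used already in the linear estimate (Lemma~\ref{l20-2-4}); neither point affects the validity of the argument.
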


Local uniqueness of single bubbling solutions for elliptic problems with critical growth was first studied in \cite{G} by using a degree-counting method,
while in \cite{DLY,GPY}, the authors used the local Pohozaev identities to deal with the local uniqueness problem for multi-bubbling solutions. The use of
the local Pohozaev identities not only simplifies  the estimates, but it also makes it possible to study the local uniqueness of solutions with large numbers
of bubbles. It is well known that the non-degeneracy of the solution and the uniqueness of such solution are two very closely related  problems. In this paper,
we shall show that the local Pohozaev identities also play an important role in the study of the non-degeneracy of the multi-bubbling solutions.

This paper is organized as follows. In section~2, we shall prove the main theorem by using the local Pohozaev identities.  As an application of this main result,
new solutions for \eqref{1.4} are constructed in section~3.

\section{The non-degeneracy of the solutions}

 For the simplicity of the notations, we consider the following equivalent problem

 \begin{equation}\label{1-31-5-21}
  -\Delta u=K_k(|y|)u^{2^*-1},
 \end{equation}
 where $K_k(y)=K\bigl(\frac y k\bigr)$.

We aim to find a $k$-bubbling solution $\bar u_k$ for \eqref{1-31-5-21}, satisfying

\[
\bar u_k = W_{\bar r_k, \bar \mu_k}(y)+\bar \omega_k,
\]
where $\bar x_{k, j}=  (\bar r_k \cos\frac{2(j-1) \pi}{k},\bar r_k\sin\frac{2(j-1)\pi}k, 0)$, $\bar \mu_k>0$ is large, and
$\bar \omega_k$ is a small perturbation term.  For this purpose, we
 define the norm as
\[
\| u\|_*= \sup_{y\in \mathbb R^N} |u(y)|\Bigl(\sum_{j=1}^k \frac{\bar \mu_k^{\frac{N-2}2}}{ ( 1+ \bar \mu_k |y-\bar x_{k, j}|)^{\frac{N-2}2+\tau}}\Bigr)^{-1}.
\]
where    $\tau=\frac{N-4}{N-2}+\bar \sigma$ with $\bar\sigma>0$ small.
 In \cite{WY},  the following result is proved.

{\bf Theorem~B.}   {\it Suppose that $K(r)$ satisfies (K)  and $N\ge 5$. Then there is an
integer $k_0>0$, such that for any integer  $k\ge k_0$,
\eqref{1-31-5-21} has a solution $\bar u_k$ of the form
\[
\bar u_k = W_{\bar r_k, \bar \mu_k}(y)+\bar \omega_k,
\]
where $\bar \omega_k\in H_s\cap  D^{1, 2}(\R^N),$
and as $k\to +\infty$,
\[
 |k^{-1}\bar r_k-r_0|= O\bigl(\frac1{(k \bar \mu_k)^{1+\sigma}}\bigr) ,\quad \| \bar \omega_k \|_* =  O\bigl(\frac1{(k \bar \mu_k)^{1+\sigma}}\bigr)
\]
 and

 \begin{equation}\label{2-31-5-21}
 \begin{split}
 & -\frac1{2^*} \frac{\Delta K(k^{-1}\bar x_{k, 1})}{N\bar \mu_k^3 k^2}\int_{\mathbb R^N} U^{2^*} |y|^2\\
 =&
  \frac{N-2}{2\bar \mu^{N-1}_k}\sum_{j=1}^k
 \frac{c_N}{|\bar x_{k,j}-\bar x_{k, 1}|^{N-2}}\int_{\mathbb R^N} U^{2^*-1}
  +O\bigl(\frac1{\bar \mu_k (k\bar \mu_k)^{2+\sigma}}\bigr),
  \end{split}
 \end{equation}
 for some $\sigma>0$.

}

For the estimate of $\bar u_k$, we have

\begin{lemma}\label{l1-26-4}
There exists a constant $C>0$ such that
\[
|\bar u_k(y)|\le  C \sum_{j=1}^k \frac{\bar \mu_k^{\frac{N-2}2}}{ ( 1+ \bar \mu_k |y-\bar  x_{k,j}|)^{N-2}}, \quad {\mbox {for all}} \quad y \in \R^N.
\]

\end{lemma}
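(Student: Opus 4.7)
The plan is to run a bootstrap iteration on the Green's function representation
\[
\bar u_k(y) \;=\; c_N \int_{\mathbb R^N} \frac{K_k(|z|)\,\bar u_k(z)^{2^*-1}}{|y-z|^{N-2}}\,dz,
\]
starting from the weak decay estimate furnished by Theorem~B. Combining the decomposition $\bar u_k = W_{\bar r_k,\bar\mu_k}+\bar\omega_k$, the explicit form of $W_{\bar r_k,\bar\mu_k}$, the definition of $\|\cdot\|_*$, and the bound $\|\bar\omega_k\|_* = O((k\bar\mu_k)^{-(1+\sigma)})$, I would record the initial pointwise estimate
\[
|\bar u_k(y)| \le C \sum_{j=1}^{k} \frac{\bar\mu_k^{(N-2)/2}}{\bigl(1+\bar\mu_k|y-\bar x_{k,j}|\bigr)^{\alpha_0}}, \qquad \alpha_0 = \frac{N-2}{2}+\tau.
\]
Since $\tau>0$, we have $\alpha_0>(N-2)/2$, but $\alpha_0 < N-2$, so improvement is needed.

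The second step is a single bootstrap pass. Given a bound of the above form with exponent $\alpha\in((N-2)/2,\,N-2)$, I would plug it into the Green's representation and invoke the standard scaled convolution inequality
\[
\int_{\mathbb R^N} \frac{1}{|y-z|^{N-2}}\cdot\frac{\bar\mu_k^{(N+2)/2}}{(1+\bar\mu_k|z-\bar x_{k,j}|)^{\beta}}\,dz \;\le\; \frac{C\,\bar\mu_k^{(N-2)/2}}{(1+\bar\mu_k|y-\bar x_{k,j}|)^{\min(\beta-2,\,N-2)}}, \qquad \beta>2,
\]
(the borderline case $\beta=N$ is harmlessly absorbed by a logarithmic loss). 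After handling the nonlinear power on the sum, this converts the exponent $\alpha$ into $\alpha_{\mathrm{new}} = \min\bigl(\alpha(2^*-1)-2,\,N-2\bigr)$.

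The third step is to iterate. The affine map $\alpha \mapsto \alpha(2^*-1)-2$ has the unstable fixed point $(N-2)/2$, so from $\alpha_0>(N-2)/2$ the iterates grow strictly and exceed $N-2$ in a finite, $k$-independent number of steps, at which point the convolution estimate saturates at the decay $N-2$ claimed in the lemma.

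The main obstacle is the uniform-in-$k$ control of the nonlinear power applied to the sum of $k$ bubble-like profiles, namely the pointwise inequality
\[
\Bigl(\sum_{j=1}^k \frac{1}{(1+\bar\mu_k|z-\bar x_{k,j}|)^{\alpha}}\Bigr)^{2^*-1} \;\le\; C \sum_{j=1}^k \frac{1}{(1+\bar\mu_k|z-\bar x_{k,j}|)^{\alpha(2^*-1)}}
\]
with $C$ independent of $k$. This is where the geometric separation of the polygon vertices is essential: since $|\bar x_{k,i}-\bar x_{k,j}|\ge c|i-j|$ uniformly in $k$, at any point $z$ only one summand is dominant, so the sum is pointwise comparable to its largest term and the $(2^*-1)$-th power of the sum is controlled by the sum of the $(2^*-1)$-th powers. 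Once this $k$-independent pointwise comparison is established (by a standard case analysis splitting according to which vertex is closest to $z$), each bootstrap pass becomes $k$-independent and the argument closes.
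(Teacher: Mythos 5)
Your overall strategy (scaled Green's representation, bootstrap on the decay exponent, iterate a gain until reaching $N-2$) is exactly the paper's, but the pointwise inequality you rely on to run a single bootstrap pass is false. You claim
\[
\Bigl(\sum_{j=1}^k \frac{1}{(1+\bar\mu_k|z-\bar x_{k,j}|)^{\alpha}}\Bigr)^{2^*-1} \;\le\; C \sum_{j=1}^k \frac{1}{(1+\bar\mu_k|z-\bar x_{k,j}|)^{\alpha(2^*-1)}}
\]
with $C$ independent of $k$, on the grounds that only one summand is dominant at each $z$. That is only true when $z$ is near one of the vertices. At the center of the polygon, $z=0$, one has $|z-\bar x_{k,j}|=\bar r_k$ for every $j$, so all $k$ terms are equal, the left-hand side is $k^{2^*-1}$ times a common value, and the right-hand side is only $k$ times that value: the inequality fails by a factor $k^{2^*-2}\to\infty$. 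The separation $|\bar x_{k,i}-\bar x_{k,j}|\gtrsim|i-j|$ does not prevent the accumulated tail $\sum_{j}a_j$ from being $\sim k\cdot\max_j a_j$ far from the vertices, and this is precisely the regime that breaks your step.

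What the paper does instead is not a raw power-comparison but a H\"older-type splitting that \emph{pays} a small piece $\tau_1$ of the exponent to produce an honest $k$-uniform factor: using $2^*-1=1+\frac{4}{N-2}$ and, for $z$ in the cell nearest $\bar x_{k,1}$, the basic cross-term bound $(1+|z-\tilde x_{k,j}|)^{-\tau_1}\le C|\tilde x_{k,1}-\tilde x_{k,j}|^{-\tau_1}$, one arrives at
\[
\Bigl(\sum_{j}\frac{1}{(1+|z-\tilde x_{k,j}|)^{\frac{N-2}{2}+\tau}}\Bigr)^{2^*-1}
\le C\Bigl(\sum_{j}\frac{1}{|\tilde x_{k,1}-\tilde x_{k,j}|^{\tau_1}}\Bigr)^{\frac{4}{N-2}}\sum_{j}\frac{1}{(1+|z-\tilde x_{k,j}|)^{\frac{N+2}{2}+\tau_1+\frac{(N+2)(\tau-\tau_1)}{N-2}}},
\]
with $\tau_1\in(\frac{N-4}{N-2},\tau)$ chosen so that $\sum_j|\tilde x_{k,1}-\tilde x_{k,j}|^{-\tau_1}$ is bounded uniformly in $k$ (here the separation is used, correctly, to make a geometric-type series converge, not to compare a sum to its max). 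Convolving and iterating then gains $\frac{4(\tau-\tau_1)}{N-2}>0$ in the exponent at each step — smaller than your advertised gain $\frac{4\tau}{N-2}$, but this is the price for a true inequality. You need to replace your pointwise comparison with this H\"older decomposition; the rest of your bootstrap scaffolding then goes through.
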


\begin{proof}

Let  $\tilde u_k (y) =  \bar  \mu_k^{-\frac{N-2}2} \bar u_k (\bar  \mu_k^{-1} y) $.  Then
\[
-\Delta \tilde u_k =  K_k (\bar  \mu_k^{-1} y) \tilde u^{2^*-1}_k.
\]
 We have
 \[
\tilde  u_k (y)= \int_{\mathbb R^N} \frac1{|z-y|^{N-2}}   K (\bar \mu_k^{-1} z) \tilde u^{2^*-1}_k(z)\,dz.
 \]
By Theorem~B,  we find
\[
\begin{split}
|\tilde u_k (y)|\le & C\int_{\mathbb R^N}  \frac1{|z-y|^{N-2}} \tilde  u_k^{2^*-1}
\,dz\\
\le & C \int_{\mathbb R^N}  \frac1{|z-y|^{N-2}}\Bigl(  \sum_{j=1}^k \frac{1}{ ( 1+  |z-\tilde x_{k, j}|)^{\frac{N-2}2+\tau}}\Bigr)^{2^*-1}\\
\le & C\int_{\mathbb R^N}  \frac1{|z-y|^{N-2}}  \sum_{j=1}^k \frac{1}{ ( 1+  |z-\tilde x_{k, j}|)^{\frac{N+2}2+\tau_1 +\frac{(N+2)(\tau-\tau_1)}{N-2}}}
\Bigl( \sum_{j=1}^k \frac{1}{  |\tilde x_{k, 1}-\tilde x_{k, j}|^{\tau_1}}\Bigr)^{\frac4{N-2}}\\
\le & C \sum_{j=1}^k \frac{1}{ ( 1+  |y-\tilde x_{k, j}|)^{\frac{N-2}2+\tau_1 +\frac{(N+2)(\tau-\tau_1)}{N-2}}},
\end{split}
\]
where $\tilde x_{k, j}=\bar  \mu_k \bar  x_{k, j}$,  and  $\tau_1\in (\frac{N-4}{N-2}, \tau)$.  Noting that
\[
\frac{N-2}2+\tau_1 +\frac{(N+2)(\tau-\tau_1)}{N-2}=\frac{N-2}2+\tau +\frac{4(\tau-\tau_1)}{N-2}> \frac{N-2}2+\tau,
\]
we can continue this process to prove the result.

\end{proof}

 Let $u\in D^{1,2}(\mathbb R^N)$ and $\xi\in D^{1,2}(\mathbb R^N)$ satisfy

 \begin{equation}\label{1-26-11}
  -\Delta u=K_k(|y|)u^{2^*-1},
 \end{equation}
 and
 \begin{equation}\label{2-26-11}
  -\Delta \xi  =(2^*-1)  K_k(|y|)  u^{2^*-2}\xi.
 \end{equation}
 Then standard arguments give $u,\, \xi\in L^\infty(\mathbb R^N)$ and

 \[
 |u(y)|,\; \; |\xi(y)|\le \frac C{(1+ |y|)^{N-2}}.
 \]

Assume that $\Omega $ is a smooth domain in $\R^N$.
 We have the following identities.

 \begin{lemma}\label{l0-14-12}
  It holds

\begin{equation}\label{1-20-12}
  \begin{split}
 &- \int_{\partial \Omega}\frac{\partial u}{\partial \nu} \frac{\partial \xi}{\partial y_i}- \int_{\partial \Omega}\frac{\partial \xi}{\partial \nu} \frac{\partial u}{\partial y_i}
 +
 \int_{\partial \Omega} \bigl\langle \nabla u, \nabla \xi\bigr\rangle \nu_i  -\int_{\partial \Omega} K_k(|y|)u^{2^*-1} \xi \nu_i\\
 = &-\int_\Omega u^{2^*-1} \xi \frac{\partial K_k(|y|) }{\partial y_i},
 \end{split}
 \end{equation}
and
\begin{equation}\label{1-26-2}
  \begin{split}
 &\int_\Omega u^{2^*-1} \xi  \langle \nabla K_k(y), y-x_0\rangle\\
 = &\int_{\partial \Omega } K_k(|y|)   u^{2^*-1} \xi  \langle \nu, y-x_0\rangle\\
 &+ \int_{\partial \Omega}\frac{\partial u}{\partial \nu} \langle \nabla \xi, y-x_0\rangle
 + \int_{\partial \Omega}\frac{\partial \xi}{\partial \nu} \langle \nabla u, y-x_0\rangle- \int_{\partial \Omega } \bigl\langle  \nabla
 u, \nabla \xi\bigr\rangle \langle \nu, y-x_0\rangle\\
 &
  +\frac{N-2}2  \int_{\partial \Omega} \xi \frac{\partial u}{\partial \nu}+\frac{N-2}2 \int_{\partial \Omega} u \frac{\partial \xi}{\partial \nu}.
 \end{split}
 \end{equation}

 \end{lemma}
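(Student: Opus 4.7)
The plan is to derive both identities as Pohozaev-type integral identities obtained by pairing suitable multipliers against the two equations \eqref{1-26-11} and \eqref{2-26-11} and integrating by parts on $\Omega$. The decay estimates on $u$ and $\xi$ stated before the lemma ensure all the integrations by parts are justified, so the only real work is a careful bookkeeping of the resulting boundary and bulk terms.

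For identity \eqref{1-20-12}, I would multiply \eqref{1-26-11} by $\partial \xi/\partial y_i$ and \eqref{2-26-11} by $\partial u/\partial y_i$, integrate over $\Omega$, and add. On the left-hand side, integration by parts produces, after adding the two contributions,
$$-\int_{\partial\Omega}\frac{\partial u}{\partial\nu}\frac{\partial \xi}{\partial y_i}-\int_{\partial\Omega}\frac{\partial \xi}{\partial\nu}\frac{\partial u}{\partial y_i}+\int_\Omega \partial_i\bigl(\langle\nabla u,\nabla\xi\rangle\bigr),$$
and the last bulk term becomes $\int_{\partial\Omega}\langle\nabla u,\nabla\xi\rangle\nu_i$. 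On the right-hand side, the key observation is the product rule
$$u^{2^*-1}\partial_i\xi+(2^*-1)u^{2^*-2}\xi\,\partial_i u=\partial_i\bigl(u^{2^*-1}\xi\bigr),$$
so integrating by parts against $K_k$ yields the boundary term $\int_{\partial\Omega}K_k u^{2^*-1}\xi\,\nu_i$ minus $\int_\Omega u^{2^*-1}\xi\,\partial_i K_k$. Rearranging gives exactly \eqref{1-20-12}.

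For identity \eqref{1-26-2}, I would now pair \eqref{1-26-11} with the multiplier $\langle\nabla\xi,y-x_0\rangle$ and \eqref{2-26-11} with $\langle\nabla u,y-x_0\rangle$ and again add. Using $\partial_i\bigl((y_l-x_{0l})\partial_l \xi\bigr)=\partial_i\xi+(y_l-x_{0l})\partial_{il}\xi$, the symmetric combination of the two integration-by-parts contributions simplifies to
$$2\int_\Omega\langle\nabla u,\nabla\xi\rangle+\int_\Omega\langle y-x_0,\nabla(\langle\nabla u,\nabla\xi\rangle)\rangle,$$
and a further integration by parts on the second integral converts it into $\int_{\partial\Omega}\langle\nabla u,\nabla\xi\rangle\langle\nu,y-x_0\rangle-N\int_\Omega\langle\nabla u,\nabla\xi\rangle$, so the net bulk contribution from the left is $-(N-2)\int_\Omega\langle\nabla u,\nabla\xi\rangle$. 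The right-hand side, via the same product rule as above, becomes
$$\int_\Omega K_k\langle\nabla(u^{2^*-1}\xi),y-x_0\rangle=\int_{\partial\Omega}K_k u^{2^*-1}\xi\langle\nu,y-x_0\rangle-\int_\Omega u^{2^*-1}\xi\bigl(NK_k+\langle\nabla K_k,y-x_0\rangle\bigr).$$

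The last step is to eliminate the remaining Dirichlet-energy integral $\int_\Omega\langle\nabla u,\nabla\xi\rangle$. To do this I would use the two \emph{test-function} identities: multiplying \eqref{1-26-11} by $\xi$ and \eqref{2-26-11} by $u$, integrating by parts, and averaging, gives
$$\int_\Omega\langle\nabla u,\nabla\xi\rangle=\frac12\int_{\partial\Omega}\Bigl(\xi\frac{\partial u}{\partial\nu}+u\frac{\partial\xi}{\partial\nu}\Bigr)+\frac{2^*}{2}\int_\Omega K_k u^{2^*-1}\xi.$$
Substituting this, and using $\tfrac{(N-2)2^*}{2}=N$, the two bulk terms $N\int_\Omega K_k u^{2^*-1}\xi$ coming from the two sides cancel exactly, and solving for $\int_\Omega u^{2^*-1}\xi\langle\nabla K_k,y-x_0\rangle$ yields \eqref{1-26-2}.

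The computation is essentially algebraic bookkeeping; the only genuine subtlety is the cancellation in the last step that forces the coefficient $\tfrac{N-2}{2}$ in the boundary terms involving $\xi\,\partial u/\partial\nu$ and $u\,\partial\xi/\partial\nu$, and the disappearance of the $N K_k u^{2^*-1}\xi$ contribution. I expect this to go through without difficulty once the multipliers are chosen as above.
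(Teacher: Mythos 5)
Your proposal is correct and follows essentially the same route as the paper: for \eqref{1-20-12} you pair \eqref{1-26-11} with $\partial\xi/\partial y_i$ and \eqref{2-26-11} with $\partial u/\partial y_i$, use $u^{2^*-1}\partial_i\xi+(2^*-1)u^{2^*-2}\xi\,\partial_i u=\partial_i(u^{2^*-1}\xi)$ and integrate by parts; for \eqref{1-26-2} you pair with the radial multipliers $\langle\nabla\xi,y-x_0\rangle$ and $\langle\nabla u,y-x_0\rangle$, and then eliminate $\int_\Omega\langle\nabla u,\nabla\xi\rangle$ via the test-function identity obtained by multiplying by $\xi$ and $u$ (this is precisely the paper's \eqref{5-26-2}--\eqref{6-26-2}), with the $N\int_\Omega K_k u^{2^*-1}\xi$ bulk cancellation following from $(N-2)2^*/2=N$. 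The bookkeeping, including the factor $\tfrac{N-2}{2}$ in the boundary terms, checks out.
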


 \begin{proof}

{\it Proof of \eqref{1-20-12}}.  We have\ \
\begin{equation}\label{3-26-11}
 \begin{split}
 & \int_\Omega\Bigl( -\Delta u \frac{\partial \xi}{\partial y_i} + (-\Delta \xi) \frac{\partial u }{\partial y_i}\Bigr)\\
 =&\int_\Omega K_k(|y|)\Bigl(  u^{2^*-1}  \frac{\partial \xi}{\partial y_i} + (2^*-1) u^{2^*-2} \xi \frac{\partial u }{\partial y_i}\Bigr).
 \end{split}
 \end{equation}

 It is easy to check that
\begin{equation}\label{4-26-11}
  \begin{split}
& \int_\Omega K_k(|y|)\Bigl(  u^{2^*-1}  \frac{\partial \xi}{\partial y_i} + (2^*-1) u^{2^*-2} \xi \frac{\partial u }{\partial y_i}\Bigr)\\
 = &\int_\Omega K_k(|y|) \frac{\partial ( u^{2^*-1} \xi) }{\partial y_i}=-\int_\Omega u^{2^*-1} \xi \frac{\partial K_k(|y|) }{\partial y_i}
 +\int_{\partial \Omega} K_k(|y|)u^{2^*-1} \xi \nu_i.
 \end{split}
 \end{equation}
Moreover,
\begin{equation}\label{6-26-11}
 \begin{split}
 & \int_\Omega\Bigl( -\Delta u \frac{\partial \xi}{\partial y_i} + (-\Delta \xi) \frac{\partial u }{\partial y_i}\Bigr)\\
 =& - \int_{\partial \Omega}\frac{\partial u}{\partial \nu} \frac{\partial \xi}{\partial y_i} + \int_\Omega \frac{\partial u}{\partial y_j} \frac{\partial^2 \xi}{\partial y_i\partial
 y_j}
 - \int_{\partial \Omega}\frac{\partial \xi}{\partial \nu} \frac{\partial u}{\partial y_i} + \int_\Omega \frac{\partial \xi}{\partial y_j} \frac{\partial^2 u}{\partial y_i\partial
 y_j}\\
 =& - \int_{\partial \Omega}\frac{\partial u}{\partial \nu} \frac{\partial \xi}{\partial y_i}- \int_{\partial \Omega}\frac{\partial \xi}{\partial \nu} \frac{\partial u}{\partial y_i}
 +\int_\Omega \frac{\partial }{\partial y_i} \Bigl(  \frac{\partial u}{\partial y_j} \frac{\partial \xi}{\partial y_j} \Bigr)\\
 =& - \int_{\partial \Omega}\frac{\partial u}{\partial \nu} \frac{\partial \xi}{\partial y_i}- \int_{\partial \Omega}\frac{\partial \xi}{\partial \nu} \frac{\partial u}{\partial y_i}+
 \int_{\partial \Omega} \bigl\langle \nabla u, \nabla \xi\bigr\rangle \nu_i.
 \end{split}
 \end{equation}
 So we have proved \eqref{1-20-12}.

\medskip
{\it Proof of \eqref{1-26-2}}. \  We have
\begin{equation}\label{2-26-2}
 \begin{split}
 & \int_\Omega\Bigl( -\Delta u \langle \nabla \xi, y-x_0\rangle  + (-\Delta \xi)  \langle \nabla u, y-x_0\rangle\Bigr)\\
 =&\int_\Omega K_k(|y|)\Bigl(  u^{2^*-1}  \langle \nabla \xi, y-x_0\rangle + (2^*-1) u^{2^*-2} \xi \langle \nabla u, y-x_0\rangle\Bigr).
 \end{split}
 \end{equation}

 It is easy to see that
 \begin{equation}\label{3-26-2}
 \begin{split}
 &\int_\Omega K_k(|y|)\Bigl(  u^{2^*-1}  \langle \nabla \xi, y-x_0\rangle + (2^*-1) u^{2^*-2} \xi \langle \nabla u, y-x_0\rangle\Bigr)\\
 =& \int_\Omega K_k(|y|)    \langle \nabla ( u^{2^*-1} \xi), y-x_0\rangle\\
 =&  \int_{\partial \Omega } K_k(|y|)   u^{2^*-1} \xi  \langle \nu, y-x_0\rangle
  -\int_\Omega u^{2^*-1} \xi  \langle \nabla K_k(y), y-x_0\rangle -N \int_\Omega K_k(|y|)     u^{2^*-1} \xi,
 \end{split}
 \end{equation}
 where $\nu$ is the outward unit normal of $\partial \Omega$  at $y\in\partial\Omega$.
 Moreover,
\begin{equation}\label{4-26-2}
 \begin{split}
 & \int_\Omega\Bigl( -\Delta u \langle \nabla \xi, y-x_0\rangle  + (-\Delta \xi)  \langle \nabla u, y-x_0\rangle\Bigr)\\
 =& - \int_{\partial \Omega}\frac{\partial u}{\partial \nu} \langle \nabla \xi, y-x_0\rangle+ \int_\Omega \frac{\partial u}{\partial y_j}\bigl\langle  \nabla \frac{\partial \xi}{ \partial
 y_j}, y-x_0\bigr\rangle + \int_\Omega \bigl\langle  \nabla
 u, \nabla \xi\bigr\rangle\\
 &
 - \int_{\partial \Omega}\frac{\partial \xi}{\partial \nu} \langle \nabla u, y-x_0\rangle + \int_\Omega \frac{\partial \xi}{\partial y_j}\bigl\langle  \nabla \frac{\partial u}{ \partial
 y_j}, y-x_0\bigr\rangle + \int_\Omega \bigl\langle  \nabla
 u, \nabla \xi\bigr\rangle\\
 =& - \int_{\partial \Omega}\frac{\partial u}{\partial \nu} \langle \nabla \xi, y-x_0\rangle
 - \int_{\partial \Omega}\frac{\partial \xi}{\partial \nu} \langle \nabla u, y-x_0\rangle+ \int_{\partial \Omega } \bigl\langle  \nabla
 u, \nabla \xi\bigr\rangle \langle \nu, y-x_0\rangle\\
 &+(2-N)  \int_\Omega \bigl\langle  \nabla
 u, \nabla \xi\bigr\rangle.
 \end{split}
 \end{equation}

 We also have
\begin{equation}\label{5-26-2}
 \begin{split}
 2^*  \int_\Omega  K_k(|y|) u^{2^*-1} \xi= & \int_\Omega\bigl( -\xi \Delta u   + u (-\Delta \xi) \bigr)\\
 =&2\int_\Omega \bigl\langle  \nabla
 u, \nabla \xi\bigr\rangle - \int_{\partial \Omega} \xi \frac{\partial u}{\partial \nu}- \int_{\partial \Omega} u \frac{\partial \xi}{\partial \nu},
 \end{split}
 \end{equation}
 which gives
\begin{equation}\label{6-26-2}
 \begin{split}
 \int_\Omega \bigl\langle  \nabla
 u, \nabla \xi\bigr\rangle = &\frac{ 2^* }2  \int_\Omega  K(|y|) u^{2^*-1} \xi
 +\frac12  \int_{\partial \Omega} \xi \frac{\partial u}{\partial \nu}+\frac12 \int_{\partial \Omega} u \frac{\partial \xi}{\partial \nu}.
 \end{split}
 \end{equation}
 Thus, the result follows.

\end{proof}

We define the linear operator
\begin{equation}\label{2-27-11}
\bar  L_k \xi= -\Delta \xi -(2^*-1)  K_k(|y|)\bar u_k^{2^*-2}\xi.
 \end{equation}

%Now, we can prove the following decay estimate for any solution $\xi\in H_s\cap D^{1, 2}(\mathbb R^N)$ of $L_k \xi = 0$.

%\begin{lemma}\label{l1-29-3}

%Let  $\xi\in H_s\cap D^{1, 2}(\mathbb R^N)$ be a bounded solution of $L_k \xi = 0$.  Then
%\[
%|\xi (y)|\le  C \sum_{j=1}^k \frac{\mu_k^{\frac{N-2}2}}{ ( 1+ \mu_k |y-x_{k,j}|)^{N-2}}.
%\]

%\end{lemma}

%\begin{proof}

%Let  $\tilde \xi_k (y) =   \mu_k^{-\frac{N-2}2} \xi_k (\mu_k^{-1} y) $.  Then
%\[
%-\Delta \tilde \xi_k = (2^*-1) K (\mu_k^{-1} y) \tilde  u_k^{2^*-2} \tilde \xi_k.
%\]
 %We have
%\[
%\tilde  \xi_k (y)=(2^*-1) \int_{\mathbb R^N} \frac1{|z-y|^{N-2}}   K (\mu_k^{-1} z)  \tilde  u_k^{2^*-2} \tilde \xi_k(z)\,dz.
% \]
%Since $\xi_k $ is bounded, by Lemma~\ref{l1-26-4},
%\[
%\begin{split}
%|\tilde \xi_k (y)|\le & C\int_{\mathbb R^N}  \frac1{|z-y|^{N-2}}  \tilde  u_k^{2^*-2}
%\,dz\\
%\le & C\int_{\mathbb R^N}  \frac1{|z-y|^{N-2}}  \sum_{j=1}^k \frac{1}{ ( 1+  |z-\tilde x_{k, j}|)^{4}}\\
%\le & C \sum_{j=1}^k \frac{1}{ ( 1+  |y-\tilde x_{k, j}|)^{2}},
%\end{split}
%\]
%where $\tilde x_{k, j}=\mu_k x_{k, j}$.
%As a result,   we find
%\[
%\begin{split}
%|\tilde \xi_k (y)|\le & C\int_{\mathbb R^N}  \frac1{|z-y|^{N-2}}  \tilde  u_k^{2^*-2}
%\sum_{j=1}^k \frac{1}{ ( 1+  |z-\tilde x_{k, j}|)^{2}}
%\,dz\\
%\le & C\int_{\mathbb R^N}  \frac1{|z-y|^{N-2}} \sum_{j=1}^k \frac{1}{ ( 1+  |z-\tilde x_{k, j}|)^{4}}\sum_{j=1}^k \frac{1}{ ( 1+  |z-\tilde x_{k, j}|)^{2}}\\
%\le & C \sum_{j=1}^k \frac{1}{ ( 1+  |y-\tilde x_{k, j}|)^{3}}.
%\end{split}
%\]
%We can continue this process to prove the result.

%\end{proof}

We now  prove Theorem~\ref{th1-29-3}, arguing  by contradiction.   Suppose that there are $k_m\to +\infty$,  $\xi_{k_m}\in H_s\cap D^{1,2}(\mathbb R^N)$, satisfying
 $\|\xi_{k_m}\|_*=1$, and
\begin{equation}\label{10-13-12}
\bar  L_{k_m}\xi_{k_m} =0.
 \end{equation}

Let
\begin{equation}\label{1-10-4}
\tilde \xi_{m} (y)=\bar \mu_{k_m}^{-\frac{N-2}2} \xi_{k_m}
(\bar \mu_{k_m}^{-1}y+ \bar  x_{k_m, 1}).
\end{equation}

\begin{lemma}\label{l1-10-4}

 It holds
\begin{equation}\label{10-9-4}
\tilde \xi_m \to  b_0\psi_0+  b_1\psi_1,
\end{equation}
uniformly in $C^1(B_R(0))$ for any $R>0$, where $b_0$   and $b_1$ are  some constants,
\[
\psi_0=\frac{\partial U_{0, \mu}}{\partial \mu}\Bigr|_{\mu=1},\quad \psi_i= \frac{\partial U_{0,1}}{\partial y_i}, \ \  i=1, ..., N.
\]

\end{lemma}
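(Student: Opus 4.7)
The plan is to recenter and rescale $\xi_{k_m}$ around $\bar x_{k_m,1}$, identify the resulting limit equation as the linearized Yamabe equation at a standard Aubin--Talenti bubble, and then cut down the kernel of that operator using the symmetries built into $H_s$.

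First, I will substitute $y=\bar\mu_{k_m}^{-1}\tilde y+\bar x_{k_m,1}$ in $\bar L_{k_m}\xi_{k_m}=0$, which produces
\[
-\Delta\tilde\xi_m=(2^*-1)\bar\mu_{k_m}^{-2}K_{k_m}\bigl(\bar\mu_{k_m}^{-1}\tilde y+\bar x_{k_m,1}\bigr)\bar u_{k_m}\bigl(\bar\mu_{k_m}^{-1}\tilde y+\bar x_{k_m,1}\bigr)^{2^*-2}\tilde\xi_m.
\]
By Theorem~B, near $\bar x_{k_m,1}$ we have $\bar u_{k_m}\approx U_{\bar x_{k_m,1},\bar\mu_{k_m}}$; the remaining bubbles sit at distances $|\bar x_{k_m,j}-\bar x_{k_m,1}|\ge c r_0>0$, so their rescaled distance $\bar\mu_{k_m}|\bar x_{k_m,j}-\bar x_{k_m,1}|$ diverges. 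Combined with $K_{k_m}(\bar x_{k_m,1})=K(\bar r_{k_m}/k_m)\to K(r_0)=1$, this shows that the coefficient of $\tilde\xi_m$ converges, locally uniformly in $\R^N$, to $(2^*-1)U_{0,1}^{2^*-2}$.

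Next, I will pass to a $C^1_{\mathrm{loc}}$ limit. The assumption $\|\xi_{k_m}\|_*=1$ translates, after extracting the dominant weight at the first bubble, to the pointwise bound $|\tilde\xi_m(\tilde y)|\le (1+|\tilde y|)^{-(N-2)/2-\tau}+o(1)$ uniformly on compact sets; the $o(1)$ absorbs the far-bubble contributions, which behave like $\sum_{j\ge 2}(\bar\mu_{k_m}|\bar x_{k_m,j}-\bar x_{k_m,1}|)^{-(N-2)/2-\tau}=O(\bar\mu_{k_m}^{-(N-2)/2-\tau})$. Standard elliptic regularity for the linear equation with smooth bounded coefficients then gives uniform $C^{1,\alpha}_{\mathrm{loc}}$ bounds on $\tilde\xi_m$, and a diagonal extraction yields $\tilde\xi_m\to\tilde\xi_\infty$ in $C^1_{\mathrm{loc}}(\R^N)$ with the same pointwise decay. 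Hence $\tilde\xi_\infty$ solves
\[
-\Delta\tilde\xi_\infty=(2^*-1)U_{0,1}^{2^*-2}\tilde\xi_\infty\qquad\text{in }\R^N.
\]
Because $\tau>0$, the decay places $\tilde\xi_\infty$ in $L^{2^*}(\R^N)$, and via Green's representation together with the equation one obtains $\tilde\xi_\infty\in D^{1,2}(\R^N)$.

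Finally, the classical non-degeneracy of the Aubin--Talenti bubble asserts that the kernel of $-\Delta-(2^*-1)U_{0,1}^{2^*-2}$ in $D^{1,2}(\R^N)$ equals $\mathrm{span}\{\psi_0,\psi_1,\dots,\psi_N\}$, so $\tilde\xi_\infty=b_0\psi_0+\sum_{i=1}^N b_i\psi_i$ for some constants $b_i$. Since $\bar x_{k_m,1}=(\bar r_{k_m},0,\dots,0)$ has vanishing components in directions $y_2,\dots,y_N$, the evenness of $\xi_{k_m}\in H_s$ in each $y_h$, $h\ge 2$, is preserved under translation and rescaling, so $\tilde\xi_\infty$ is even in $\tilde y_h$ for $h\ge 2$; since $\psi_h=\partial_{y_h}U_{0,1}$ is odd in $\tilde y_h$, this forces $b_h=0$ for $h=2,\dots,N$ and leaves $\tilde\xi_\infty=b_0\psi_0+b_1\psi_1$. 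I expect the principal technical point to be the uniform pointwise control of $\tilde\xi_m$ on compact sets of the rescaled variable---that is, splitting the weight in $\|\cdot\|_*$ into the dominant first-bubble term and a vanishing remainder coming from the other $k_m-1$ bubbles---which will rely on Lemma~\ref{l1-26-4} together with the asymptotics $\bar r_{k_m}\sim k_m r_0$ and $\bar\mu_{k_m}\to\infty$ from Theorem~B.
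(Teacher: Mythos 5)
Your proposal is correct and follows essentially the same route as the paper: rescale around $\bar x_{k_m,1}$, extract a $C^1_{\mathrm{loc}}$ limit, identify the limit equation as the linearized Yamabe equation at $U_{0,1}$, invoke the classical characterization of its kernel, and then use the evenness in $y_h$, $h\ge 2$ (preserved since $\bar x_{k_m,1}$ has zero components in those directions) to kill $b_2,\dots,b_N$. You simply fill in more of the technical detail than the paper does, in particular the elliptic bootstrap for $C^1_{\mathrm{loc}}$ convergence and the decay/$D^{1,2}$ membership of the limit that justifies applying non-degeneracy.
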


\begin{proof}

  In view of
$|\tilde \xi_{m}|\le C$, we may assume that $\tilde \xi_{m}\to \xi$ in
$ C_{loc}(\mathbb R^N)$.   Then $\xi$ satisfies
\begin{equation}\label{3-8-4}
-\Delta \xi = (2^*-1) U^{2^*-2}\xi, \quad \text{in}\; \mathbb R^N,
\end{equation}
which gives
\begin{equation}\label{4-8-4}
\xi =\sum_{i=0}^{N} b_{i}\psi_i.
\end{equation}

Since $\xi_m$ is even in $y_i$, $i=2,\cdots, N$,  it holds $b_i=0$, $i=2,\cdots,N$.

\end{proof}

Similar to Lemma~\ref{l1-26-4}, we can prove the following result.

\begin{lemma}\label{l10-28-2}

There exists a constant $C>0$ such that
\[
|\xi_{k_m}(y)|\le  C \sum_{j=1}^{k_m} \frac{\bar \mu_{k_m}^{\frac{N-2}2}}{ ( 1+ \bar \mu_{k_m} |y-\bar x_{k_m,j}|)^{N-2}}, \quad {\mbox {for all}} \quad y \in \R^N.
\]

\end{lemma}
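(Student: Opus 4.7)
The plan is to closely mimic the proof of Lemma \ref{l1-26-4}, replacing the nonlinear right-hand side $K_k(|y|)\bar u_k^{2^*-1}$ by the linear one $(2^*-1)K_k(|y|)\bar u_k^{2^*-2}\xi_{k_m}$. The heart of the argument is a pointwise bootstrap driven by the Newtonian potential representation, fed by the already-established decay of $\bar u_{k_m}$ from Lemma \ref{l1-26-4}.

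First, I will rescale by setting $\tilde\xi_m(y)=\bar\mu_{k_m}^{-(N-2)/2}\xi_{k_m}(\bar\mu_{k_m}^{-1}y)$ and $\tilde u_m(y)=\bar\mu_{k_m}^{-(N-2)/2}\bar u_{k_m}(\bar\mu_{k_m}^{-1}y)$. Using the identity $(2^*-2)(N-2)/2=2$, a direct computation yields the rescaled equation $-\Delta\tilde\xi_m=(2^*-1)K_{k_m}(\bar\mu_{k_m}^{-1}|y|)\tilde u_m^{2^*-2}\tilde\xi_m$ in $\mathbb R^N$, and hence the pointwise estimate
\[
|\tilde\xi_m(y)|\le C\int_{\mathbb R^N}\frac{\tilde u_m^{2^*-2}(z)\,|\tilde\xi_m(z)|}{|z-y|^{N-2}}\,dz.
\]

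Next, I will record the input bounds. Lemma \ref{l1-26-4} applied to $\bar u_{k_m}$ gives, with $\tilde x_j=\bar\mu_{k_m}\bar x_{k_m,j}$,
\[
\tilde u_m(z)\le C\sum_{j=1}^{k_m}\frac{1}{(1+|z-\tilde x_j|)^{N-2}},
\]
while the hypothesis $\|\xi_{k_m}\|_*=1$ rescales to the weaker starting bound $|\tilde\xi_m(z)|\le\sum_{j=1}^{k_m}(1+|z-\tilde x_j|)^{-(N-2)/2-\tau}$. Plugging both bounds into the integral representation and applying the same H\"older-type redistribution trick used in Lemma \ref{l1-26-4}---choosing $\tau_1\in((N-4)/(N-2),\tau)$ and extracting an interaction factor of the form $(\sum_{j\ne 1}|\tilde x_1-\tilde x_j|^{-\tau_1})^{4/(N-2)}$---each such step increases the decay exponent of $\tilde\xi_m$ by a fixed amount $\delta=4(\tau-\tau_1)/(N-2)>0$. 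Iterating finitely many times drives the exponent up to the ceiling $N-2$ imposed by the Green's function, and undoing the rescaling produces the stated pointwise bound.

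The principal technical obstacle, exactly as in Lemma \ref{l1-26-4}, is to ensure that the interaction factor $\sum_{j\ne 1}|\tilde x_1-\tilde x_j|^{-\tau_1}$ is bounded uniformly in $m$. This reduces to a geometric check: the rescaled vertices $\tilde x_j$ sit on a circle whose radius $\bar\mu_{k_m}\bar r_{k_m}$ grows much faster than $k_m$ (since $\bar\mu_{k_m}\sim k_m^{(N-2)/(N-4)}$ and $\bar r_{k_m}\sim k_m r_0$ by Theorem~B), so the neighboring pairwise distances diverge uniformly in $m$, and the condition $\tau_1>(N-4)/(N-2)$ guarantees summability. A subsidiary nuisance arises for $N=5$, where $2^*-2=4/3>1$ and the naive inequality $(\sum a_j)^{2^*-2}\le\sum a_j^{2^*-2}$ fails; this is handled by splitting the integration region near each vertex $\tilde x_j$ and isolating the dominant bubble there, as in the original proof.
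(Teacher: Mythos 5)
Your proposal is essentially the paper's own argument: rescale around a bubble center, use the Newtonian potential representation $|\tilde\xi_m|\le C\int|z-y|^{2-N}\tilde u_m^{2^*-2}|\tilde\xi_m|\,dz$, feed in the decay of $\tilde u_m$ from Lemma~\ref{l1-26-4} and the $\|\cdot\|_*$ bound on $\xi_{k_m}$, then run the Wei--Yan redistribution estimate iteratively to push the exponent from $\frac{N-2}{2}+\tau$ up to $N-2$. The paper's proof is exactly this one-paragraph bootstrap.

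Two small remarks. First, the worry about $N=5$ and the failure of $(\sum a_j)^{2^*-2}\le\sum a_j^{2^*-2}$ is moot for the step you actually use: neither you nor the paper distribute the exponent $4/(N-2)$ across the sum. One simply keeps $\bigl(\sum_j(1+|z-\tilde x_j|)^{-(N-2)}\bigr)^{4/(N-2)}$ intact, combines it with $|\tilde\xi_m|\le\|\tilde\xi_m\|_*\sum_j(1+|z-\tilde x_j|)^{-(N-2)/2-\tau}$ using $(1+s)^{-(N-2)}\le(1+s)^{-(N-2)/2-\tau}$ to obtain $\bigl(\sum_j(1+|z-\tilde x_j|)^{-(N-2)/2-\tau}\bigr)^{2^*-1}$, and only then applies the Wei--Yan localization, which has no sign constraint on $2^*-2$ versus $1$. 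Second, a small numerical slip: after the rescaling to $K_k=K(\cdot/k)$, the parameter is $\bar\mu_k=\mu_k/k\sim k^{2/(N-4)}$, not $k^{(N-2)/(N-4)}$. The product $\bar\mu_k\bar r_k\sim k^{(N-2)/(N-4)}$ is what diverges, and the nearest-neighbor separation $|\tilde x_1-\tilde x_2|\sim\bar\mu_k$. Your summability criterion $\tau_1>\frac{N-4}{N-2}$ is nonetheless exactly right: with $|\tilde x_1-\tilde x_j|\sim\bar\mu_k\, j$ for $j\lesssim k$, one gets $\sum_{j\ge2}|\tilde x_1-\tilde x_j|^{-\tau_1}\lesssim\bar\mu_k^{-\tau_1}k^{1-\tau_1}$, which stays bounded precisely when $\tau_1(N-2)\ge N-4$. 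So the conclusion is unaffected, and the proof goes through as proposed.
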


\begin{proof}
 By  Lemma~\ref{l1-26-4}, we have

\[
\begin{split}
|\tilde \xi _m (y)|\le & C\int_{\mathbb R^N}  \frac1{|z-y|^{N-2}} \Bigl(  \sum_{j=1}^{k_m}\frac{1}{ ( 1+  |z-\tilde x_{k_m, j}|)^{N-2}}\Bigr)^{\frac4{N-2}} |\tilde \xi_m|
\,dz\\
\le & C\|\tilde \xi_m\|_*\int_{\mathbb R^N}  \frac1{|z-y|^{N-2}}\Bigl(  \sum_{j=1}^{k_m} \frac{1}{ ( 1+  |z-\tilde x_{k_m, j}|)^{\frac{N-2}2+\tau}}\Bigr)^{2^*-1}\\
\le & C \sum_{j=1}^{k_m}\frac{1}{ ( 1+  |y-\tilde x_{k_m, j}|)^{\frac{N-2}2+\tau_1 +\frac{(N+2)(\tau-\tau_1)}{N-2}}}.
\end{split}
\]
We can continue this process to prove the result.
\end{proof}

We will use \eqref{1-20-12}
for $u=\bar u_k$ and  $\Omega = B_\delta(\bar x_{k, 1})$.
Let  $G(y, x)=\frac1{(N-2)\omega_{N-1}}\frac1{|y-x|^{N-2}}$, where $\omega_{N-1}$ is the area of the unit sphere.

\begin{lemma}\label{l1-31-5-21}
For $\delta>0$ small, we have

\[
\bar u_{k_m}(y) =\frac{A_m}{\bar \mu_{k_m}^{\frac{N-2}2}} \sum_{j=1}^{k_m} G(y, \bar x_{k_m, j})
+ O\bigl(\frac1{\bar \mu_{k_m}^{\frac{N-2}2+\sigma}}\bigr),\quad \text{in}\;  \cup_{j=1}^k (B_{2\delta}(\bar x_{k_m, j})\setminus (B_{\frac12\delta}(\bar x_{k_m, j}),
\]

\[
\partial \bar u_{k_m}(y) =\frac{A_m}{\bar \mu_k^{\frac{N-2}2}} \sum_{j=1}^{k_m} \partial_x G(y, \bar x_{k_m, j})
+ O\bigl(\frac1{\bar \mu_{k_m}^{\frac{N-2}2+\sigma}}\bigr),\quad \text{in}\;  \cup_{j=1}^k (B_{2\delta}(\bar x_{k_m, j})\setminus (B_{\frac12\delta}(\bar x_{k_m, j}),
\]

\[
\xi_{k_m}(y) =\frac{B_m}{\bar \mu_k^{\frac{N-2}2}} \sum_{j=1}^{k_m} G(y, \bar x_{k_m, j})
+ O\bigl(\frac1{\bar \mu_{k_m}^{\frac{N-2}2+\sigma}}\bigr),\quad \text{in}\;  \cup_{j=1}^k (B_{2\delta}(\bar x_{k_m, j})\setminus (B_{\frac12\delta}(\bar x_{k_m, j}),
\]
and

\[
\partial \xi_{k_m}(y)=\frac{B_m}{\bar \mu_k^{\frac{N-2}2}} \sum_{j=1}^{k_m}\partial_x G(y, \bar x_{k_m, j})
+ O\bigl(\frac1{\bar \mu_{k_m}^{\frac{N-2}2+\sigma}}\bigr),\quad \text{in}\;  \cup_{j=1}^k (B_{2\delta}(\bar x_{k_m, j})\setminus (B_{\frac12\delta}(\bar x_{k_m, j}),
\]
where

\[
A_m= \int_{B_\delta(\bar x_{k_m, j})} K_{k_m}(y) \bar u^{2^*-1}_{k_m}\to \int_{\mathbb R^N} U^{2^*-1},
\]

 \[
 B_m  =(2^*-1)\int_{B_\delta(\bar x_{k_m, j})} K_{k_m}(y) \bar u^{2^*-2}_{k_m}\xi_{k_m}\to (2^*-1)b_0 \int_{\mathbb R^N} U^{2^*-2}\psi_0
 \]
   and $\sigma>0$ is a constant.
\end{lemma}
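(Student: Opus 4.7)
The plan is to start from the Newtonian representation $\bar u_{k_m}(y)=\int_{\mathbb R^N}G(y,z)K_{k_m}(|z|)\bar u_{k_m}^{2^*-1}(z)\,dz$, together with the analogous formula for $\xi_{k_m}$ with integrand $(2^*-1)G(y,z)K_{k_m}(|z|)\bar u_{k_m}^{2^*-2}(z)\xi_{k_m}(z)$, and then to split $\mathbb R^N$ into the balls $B_\delta(\bar x_{k_m,j})$ for $j=1,\dots,k_m$, plus the exterior set $\Omega_{\mathrm{ext}}=\mathbb R^N\setminus\bigcup_j B_\delta(\bar x_{k_m,j})$. These balls are mutually disjoint once $\delta$ is small, since adjacent centres are at distance of order $2\pi r_0$. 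Throughout, $y$ is assumed to lie in the annular region $B_{2\delta}(\bar x_{k_m,j_0})\setminus B_{\delta/2}(\bar x_{k_m,j_0})$ for some index $j_0$; by rotational symmetry it suffices to handle $j_0=1$.

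On each inner ball $B_{\delta/4}(\bar x_{k_m,j})$ I would Taylor expand
\begin{equation*}
G(y,z)=G(y,\bar x_{k_m,j})+\bigl\langle z-\bar x_{k_m,j},\nabla_{\!z}G(y,\bar x_{k_m,j})\bigr\rangle+O\bigl(|z-\bar x_{k_m,j}|^2\bigr),
\end{equation*}
which is legitimate because $|y-z|\geq\delta/4$. After the rescaling $z=\bar x_{k_m,j}+\bar\mu_{k_m}^{-1}w$, the pointwise bounds in Lemmas~\ref{l1-26-4} and~\ref{l10-28-2} allow me to pass to the limit inside the rescaled integrals. The zeroth-order term produces the $G(y,\bar x_{k_m,j})$ piece of the expansion with prefactors $\bar\mu_{k_m}^{-(N-2)/2}A_m$ and $\bar\mu_{k_m}^{-(N-2)/2}B_m$; the convergence $A_m\to\int U^{2^*-1}$ follows from dominated convergence, while $B_m\to(2^*-1)b_0\int U^{2^*-2}\psi_0$ uses the $C^1_{\mathrm{loc}}$-limit $\tilde\xi_m\to b_0\psi_0+b_1\psi_1$ of Lemma~\ref{l1-10-4}, the $b_1$ term dropping out because $\int U^{2^*-2}\psi_1=(2^*-1)^{-1}\int\partial_{y_1}U^{2^*-1}=0$ by integration by parts. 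The first-order Taylor term vanishes at leading order by the same oddness against the nearly-radial profile and contributes at size $\bar\mu_{k_m}^{-(N-2)/2-1-\sigma}$; the quadratic remainder contributes $\bar\mu_{k_m}^{-(N-2)/2-2}$. Summing over $j$ assembles the displayed expansion.

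The annular pieces $B_\delta(\bar x_{k_m,j})\setminus B_{\delta/4}(\bar x_{k_m,j})$ and the exterior set $\Omega_{\mathrm{ext}}$ are both handled by crude estimation: Lemmas~\ref{l1-26-4} and~\ref{l10-28-2} bound $\bar u_{k_m}^{2^*-1}$ and $\bar u_{k_m}^{2^*-2}\xi_{k_m}$ there by polynomial tails of the bubbles, whose integrals are $O(\bar\mu_{k_m}^{-(N-2)/2-2})$, and convolution with $G(y,\cdot)\in L^1_{\mathrm{loc}}$ absorbs the result into the $O(\bar\mu_{k_m}^{-(N-2)/2-\sigma})$ error. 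The formulas for $\partial\bar u_{k_m}$ and $\partial\xi_{k_m}$ are obtained by differentiating the representations under the integral sign, so that $\partial_yG(y,z)=-\partial_zG(y,z)$ takes the role of $G$; the same decomposition and Taylor expansion then reproduce the claimed gradient expansions with $\partial_xG(y,\bar x_{k_m,j})$ appearing.

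The main obstacle is keeping the error terms uniformly small as $k_m\to\infty$. Naively summing the Taylor remainders over the $k_m$ balls brings a factor of $k_m$, and with $\bar\mu_{k_m}\sim k_m^{(N-2)/(N-4)}$ from Theorem~B this could a priori destroy the target decay. The saving will come from the regular polygonal structure of the centres together with the summability $\sum_{j\geq 2}|\bar x_{k_m,1}-\bar x_{k_m,j}|^{-(N-2)}=O(1)$ valid for $N\geq 5$, combined with the $(k_m\bar\mu_{k_m})^{-1-\sigma}$ decay of $\|\bar\omega_k\|_*$ furnished by Theorem~B; this is exactly the bookkeeping already performed in Lemmas~\ref{l1-26-4} and~\ref{l10-28-2}, which should transfer here with only cosmetic adjustments.
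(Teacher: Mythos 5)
Your proposal follows essentially the same route as the paper: start from the Newtonian/Green representation, split the domain into the $k_m$ balls around the concentration points plus the complement, freeze (Taylor-expand) the kernel $G$ on each ball to extract the leading coefficients $A_m$ and $B_m$, pass to the limit via the pointwise bounds of Lemmas~\ref{l1-26-4} and~\ref{l10-28-2}, and absorb the tails using the summability $\sum_{j\ge 2}|\bar x_{k_m,1}-\bar x_{k_m,j}|^{-(N-2)}=O(1)$. The only stylistic difference is that you carry the Taylor expansion of $G$ to second order and invoke oddness to kill the first-order term; this refinement is harmless but unnecessary, since a direct size estimate of the first-order remainder (no cancellation) already gives $O(\bar\mu_{k_m}^{-(N-2)/2-1})$, which is comfortably inside the claimed $O(\bar\mu_{k_m}^{-(N-2)/2-\sigma})$ for small $\sigma$.
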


\begin{proof}

 For simplicity, we drop the subscript $m$ in $k_m$. We have

\[
\bar  u_k(x)=\int_{\mathbb R^N} G(y, x)K_k(y) \bar u^{2^*-1}_k,\quad \partial \bar u_k(x)=\int_{\mathbb R^N} \partial_x G(y, x)K_k(y) \bar u^{2^*-1}_k.
\]
Now we do the estimate for the derivatives.

\[
\begin{split}
\partial_{x_j} \bar  u_k(x)=&\sum_{j=1}^k \int_{B_\delta(\bar x_{k, j})} \partial_{x_j} G(y, x)K_k(y) \bar  u^{2^*-1}_k +
 \sum_{j=1}^k \int_{\Omega_j\setminus B_\delta(\bar x_{k, j})} \partial_{x_j} G(y, x)K_k(y) \bar  u^{2^*-1}_k
\end{split}
\]
We assume that $x\in \Omega_1$.  Then for $y\in \Omega_j$, $j\ge 2$, it holds

\[
|y-x|\ge |y-\bar x_1|-2\delta\ge \frac12 |\bar x_j-\bar x_1| -2\delta \ge\frac14|\bar x_j-\bar x_1|.
\]
Thus,

\[
\begin{split}
&\Bigl|\sum_{j=2}^k \int_{\Omega_j\setminus B_\delta(\bar x_{k, j})} \partial_{x_j} G(y, x)K_k(y) u^{2^*-1}_k\Bigr|\le
\sum_{j=2}^k\frac1{|\bar x_j-\bar x_1|^{N-1}}\int_{\Omega_j\setminus B_\delta(\bar x_{k, j})}  u^{2^*-1}_k
\\
\le & \frac C{\bar \mu_k^{\frac{N-2}2+\sigma}}\sum_{j=2}^k\frac1{|\bar x_j-\bar x_1|^{N-1}}\le \frac C{\bar \mu_k^{\frac{N-2}2+\sigma}}.
\end{split}
\]
On the other hand, we have

\[
\int_{\Omega_1\setminus B_\delta(\bar x_{k, j})} \partial_{x_j} G(y, x)K_k(y) u^{2^*-1}_k=O\bigl( \frac 1{\bar \mu_k^{\frac{N-2}2+\sigma}}\bigr),
\]
and

\[
\int_{B_\delta(\bar x_{k, j})} \partial_{x_j} G(y, x)K_k(y) u^{2^*-1}_k=\partial_{x_j} G(y, \bar x_{k, j})) \int_{B_\delta(\bar x_{k, j})} K_k(y) u^{2^*-1}_k+O\bigl( \frac 1{\bar \mu_k^{\frac{N-2}2+\sigma}}\bigr).
\]
Using Lemma~\ref{l1-26-4}, it is easy to prove

\[
 \int_{B_\delta(\bar x_{k_m, j})} K_{k_m}(y) \bar u^{2^*-1}_{k_m}\to \int_{\mathbb R^N} U^{2^*-1}.
\]
So, we obtain the estimate for $\nabla u_k$.

 Similarly, we have

\[
\xi_k(x)=\int_{\mathbb R^N} G(y, x) (2^*-1)u^{2^*-2}_k\xi_k,\quad \partial_x \xi_k(x)=\int_{\mathbb R^N} \partial_x G(y, x)(2^*-1) u^{2^*-2}_k\xi_k.
\]
Using Lemmas~\ref{l10-28-2} and \ref{l1-10-4}, we can prove

\[
(2^*-1)\int_{B_\delta(\bar x_{k_m, j})} K_{k_m}(y) \bar u^{2^*-2}_{k_m}\xi_{k_m}\to (2^*-1)b_0 \int_{\mathbb R^N} U^{2^*-2}\psi_0.
 \]
So we can obtain the estimate for $\xi_k$.
\end{proof}

Now we are ready to prove the following result.

\begin{lemma}\label{l1-14-12}

 It holds $b_0=b_1=0$.

\end{lemma}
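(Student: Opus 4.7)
The plan is to extract two independent linear conditions on $(b_0, b_1)$ by applying the two local Pohozaev identities of Lemma~\ref{l0-14-12} to the pair $(u, \xi) = (\bar u_{k_m}, \xi_{k_m})$ on the ball $\Omega = B_\delta(\bar x_{k_m, 1})$ for a small fixed $\delta > 0$. By the $H_s$-symmetry I may place $\bar x_{k_m, 1}$ on the positive $y_1$-axis, so that $y_1$ is the radial direction at this point.

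\textbf{Translation identity (giving $b_1 = 0$).} I apply \eqref{1-20-12} with $i = 1$. On the boundary, I substitute the Green's function expansions of Lemma~\ref{l1-31-5-21} for $\bar u_{k_m}, \xi_{k_m}$ and their gradients; the leading contribution is of size $A_m B_m /\bar\mu_{k_m}^{N-2}$ times the bounded geometric sum $\sum_{j\geq 2}((\bar x_{k_m,1})_1 - (\bar x_{k_m,j})_1)/|\bar x_{k_m,1}-\bar x_{k_m,j}|^N$. On the interior side $-\int_\Omega \bar u_{k_m}^{2^*-1}\xi_{k_m}\,\partial_{y_1} K_{k_m}$, I rescale $y = \bar x_{k_m, 1} + z/\bar\mu_{k_m}$ and Taylor-expand $\partial_{y_1} K_{k_m}$ using (K), $K'(r_0) = 0$, and the Theorem~B estimate $|k_m^{-1}\bar r_{k_m} - r_0| = O((k_m\bar\mu_{k_m})^{-(1+\sigma)})$; the leading term is $-(2c_0/(k_m^2\bar\mu_{k_m}))\,z_1$. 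Its pairing with $b_0\psi_0 + b_1\psi_1$ collapses by parity ($\int_{\R^N} U^{2^*-1}\psi_0\, z_1 = 0$) to a nonzero multiple of $b_1/(k_m^2\bar\mu_{k_m})$. Using $\bar\mu_{k_m}\sim k_m^{(N-2)/(N-4)}$ from Theorem~A one checks $k_m^2/\bar\mu_{k_m}^{N-3} \to 0$, so the boundary side is strictly of lower order than the interior; together with the error bounds discussed below this forces $b_1 = 0$.

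\textbf{Dilation identity (giving $b_0 = 0$).} Next I apply \eqref{1-26-2} with $x_0 = \bar x_{k_m, 1}$. The boundary terms, computed again via Lemma~\ref{l1-31-5-21}, contribute a term of size $A_m B_m/\bar\mu_{k_m}^{N-2}$ times a bounded geometric sum (matching the structure on the right-hand side of \eqref{2-31-5-21}). For the interior $\int_\Omega \bar u_{k_m}^{2^*-1}\xi_{k_m}\,\langle \nabla K_{k_m}(y), y-\bar x_{k_m,1}\rangle$, rescaling and expanding with (K) yield a leading contribution proportional to $z_1^2/(k_m^2\bar\mu_{k_m}^2)$, which by parity ($\int_{\R^N} U^{2^*-1}\psi_1\, z_1^2 = 0$) pairs only with $b_0\psi_0$ to give a nonzero multiple of $b_0/(k_m^2\bar\mu_{k_m}^2)$. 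Since $k_m^2\bar\mu_{k_m}^2 \ll \bar\mu_{k_m}^{N-2}$ for $N\geq 5$, the interior again dominates the boundary; combined with the already-established $b_1 = 0$, the identity forces $b_0 = 0$.

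\textbf{Main obstacle.} The hard part will be a uniformly controlled book-keeping of all error terms: the $O(\bar\mu_{k_m}^{-(N-2+\sigma)})$ Green's-function remainders from Lemma~\ref{l1-31-5-21}; the higher-order Taylor corrections to $K_{k_m}$ (for which one must combine (K) with the sharp Theorem~B estimate on $|k_m^{-1}\bar r_{k_m} - r_0|$); and the passage to the limit inside the rescaled interior integrals, for which the uniform decay bound of Lemma~\ref{l10-28-2} is essential. Each such error must be shown to be strictly smaller than the surviving interior contributions proportional to $b_0$ or $b_1$, and it is precisely the scaling $\bar\mu_{k_m}\sim k_m^{(N-2)/(N-4)}$ of Theorem~A that makes these delicate balances come out in the right direction.
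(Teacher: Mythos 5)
Your Step~1 is essentially the paper's Step~1: both apply the translation identity \eqref{1-20-12} on $B_\delta(\bar x_{k,1})$ and balance a boundary contribution (of order $b_0/(k^2\bar\mu_{k}^2)$) against an interior contribution whose leading piece is linear in $b_1$ and strictly larger, so that $b_1=0$ follows. That part is sound, up to the minor point that in the rescaled problem of Section~2 the relevant scaling is $\bar\mu_{k}\sim k^{2/(N-4)}$ (equivalently $\bar\mu_k^{N-4}\sim k^2$), not $\bar\mu_k\sim k^{(N-2)/(N-4)}$, which is the original-variable $\mu_k$ of Theorem~A; this does not change the Step~1 conclusion.

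Your Step~2, however, has a genuine gap. You claim that $k^2\bar\mu_{k}^2\ll\bar\mu_{k}^{N-2}$ so that the interior term $\sim b_0/(k^2\bar\mu_k^2)$ dominates the boundary term $\sim A_m B_m/\bar\mu_k^{N-2}$. With the correct scaling $\bar\mu_k^{N-4}\sim k^2$ one has
$$k^2\,\bar\mu_k^2 \;\sim\; \bar\mu_k^{N-4}\cdot\bar\mu_k^2 \;=\; \bar\mu_k^{N-2},$$
so the two terms are the \emph{same} order, not separated by a gap. (Indeed, $B_m\to (2^*-1)b_0\int U^{2^*-2}\psi_0$ is itself proportional to $b_0$, so the ``boundary'' side also carries a $b_0$.) Consequently, ``interior dominates boundary'' is false, and no conclusion about $b_0$ follows without a precise comparison of the constants multiplying $b_0$ on the two sides. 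This exact balance is what the paper performs: it applies the dilation identity globally on $\R^N$ with $x_0=0$ (so there are no boundary terms at all), reduces to the sector $\Omega_1$ by symmetry, and decomposes $\langle\nabla K_{k},y\rangle=\langle\nabla K_{k},y-\bar x_{k,1}\rangle+r_{k}\,\partial_{y_1}K_{k}$; the first piece gives the constant $\frac1N\int U^{2^*-1}\psi_0|y|^2=-\frac{2}{2^*N}\int U^{2^*}|y|^2$ and the second, via the Step~1 relation \eqref{20-31-5-21} and the balance identity \eqref{2-31-5-21}, gives $\frac{N-2}{2^*N}\int U^{2^*}|y|^2$; since $2\ne N-2$ for $N\ge 5$, $b_0=0$. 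Your local version with $x_0=\bar x_{k,1}$ could in principle be made to work, but only by explicitly computing the boundary constant (via the Green's-function expansion and a Pohozaev identity for harmonic functions and, crucially, invoking \eqref{2-31-5-21} to relate the geometric sum $\sum_{j\ge 2}|\bar x_{k,j}-\bar x_{k,1}|^{-(N-2)}$ to $1/(\bar\mu_k^3 k^2)$) and showing it differs from the interior constant; as written, that step is missing and the order comparison used to sidestep it is incorrect.
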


\begin{proof}

\textbf{Step~1.}  We apply the identities in Lemma~\ref{l0-14-12}  in the domain  $B_\delta(\bar x_{k, 1})$:
\begin{equation}\label{2-14-12}
   \begin{split}
 & - \int_{\partial B_\delta(\bar x_{k_m, 1}) }\frac{\partial u_{k_m}}{\partial \nu} \frac{\partial \xi_{k_m}}{\partial y_1}- \int_{\partial B_\delta(\bar x_{k_m, 1})}\frac{\partial \xi_{k_m}}{\partial \nu} \frac{\partial u_{k_m}}{\partial y_1}
 +
 \int_{\partial B_\delta(\bar x_{k_m, 1})} \bigl\langle \nabla u_{k_m}, \nabla \xi_{k_m}\bigr\rangle \nu_1\\
  &-\int_{\partial B_\delta(\bar x_{k_m, 1})} K_{k_m}(|y|)u_{k_m}^{2^*-1} \xi_m \nu_1
 =-\int_{  B_\delta(\bar x_{k_m, 1})} u_{k_m}^{2^*-1} \xi_{k_m} \frac{\partial K_{k_m}(|y|) }{\partial y_1}.
 \end{split}
 \end{equation}

 It is easy to check by using Lemma~\ref{l1-31-5-21} that

\begin{equation}\label{10-31-5-21}
  \int_{\partial B_\delta(\bar x_{k_m, 1})} K_{k_m}(|y|)u_{k_m}^{2^*-1} \xi_{k_m} \nu_1
 =O\bigl( \frac 1{(k_m\bar \mu_{k_m})^{2+\sigma}}\bigr).
 \end{equation}

Let
\begin{equation}\label{2-21-12}
  Q(u, \xi, \delta)=- \int_{\partial B_\delta(\bar x_{k_m, 1}) }\frac{\partial u}{\partial \nu} \frac{\partial \xi}{\partial y_1}- \int_{\partial B_\delta(\bar x_{k_m, 1})}\frac{\partial \xi}{\partial \nu} \frac{\partial u}{\partial y_1}
 +
 \int_{\partial B_\delta(\bar x_{k_m, 1})} \bigl\langle \nabla u, \nabla \xi\bigr\rangle \nu_1.
 \end{equation}

Using Lemma~\ref{l1-31-5-21} , we see that

\begin{equation}\label{11-31-5-21}
   Q(u_{k_m}, \xi_{k_m}, \delta)
 =Q\bigl( \frac{A_m}{\bar \mu_{k_m}^{\frac{N-2}2}} \sum_{j=1}^{k_m} G(y, \bar x_{k_m, j}), \frac{ B_m}{\bar \mu_{k_m}^{\frac{N-2}2}} \sum_{j=1}^{k_m} G(y, \bar x_{k_m, j}) , \delta  \bigr)+O\bigl( \frac 1{(k_m\bar \mu_{k_m})^{2+\sigma}}\bigr).
 \end{equation}

 Since $\sum_{j=1}^{k_m} G(y, \bar x_{k_m, j})$ are harmonic in $B_\delta(\bar x_{k_m, 1})\setminus B_\theta(\bar x_{k_m, 1})$ for any $\theta\in (0, \delta)$, it holds

 \begin{equation}\label{12-31-5-21}
 \begin{split}
 &  Q\bigl( \sum_{j=1}^{k_m} G(y, \bar x_{k_m, j}),  \sum_{j=1}^{k_m} G(y, \bar x_{k_m, j}) , \delta  \bigr)=
   Q\bigl( \sum_{j=1}^{k_m} G(y, \bar x_{k_m, j}),  \sum_{j=1}^{k_m} G(y, \bar x_{k_m, j}) , \theta \bigr)\\
   =&Q\bigl(  G(y, \bar x_{k_m, 1}),  \sum_{j=2}^{k_m} G(y, \bar x_{k_m, j}) , \theta \bigr)+Q\bigl( \sum_{j=2}^{k_m} G(y, \bar x_{k_m, j}),   G(y, \bar x_{k_m, j1}) , \theta \bigr)+o_\theta(1)\\
   =&-2\int_{\partial B_\theta(\bar x_{k_m, 1})}\frac{\partial G(y, \bar x_{k_m, 1})}{\partial \nu}
   \frac{\partial }{\partial y_1}\sum_{j=2}^{k_m} G(y, \bar x_{k_m, j})+o_\theta(1)\\
   =&\frac2{(N-2)^2\omega_{N-1}^2} \int_{\partial B_\theta(\bar x_{k_m, 1})}\frac{N-2}{|y-\bar x_{k_m, 1}|^{N-1}}\sum_{j=2}^{k_m} \frac{(N-2)(\bar x_{k_m, j}-\bar x_{k, 1})_1}{|\bar x_{k_m, j}-\bar x_{k_m, 1}|^N}+o_\theta(1)\\
    =&\frac2{(N-2)^2\omega_{N-1}^2} (N-2)\omega_{N-1}\sum_{j=2}^{k_m} \frac{(N-2)(\bar x_{k_m, j}-\bar x_{k_m, 1})_1}{|\bar x_{k_m, j}-\bar x_{k_m, 1}|^N}+o_\theta(1).
   \end{split}
 \end{equation}

 We have $|\bar  x_{k_m, j}-\bar  x_{k_m, 1}|= 2\bar  r_{k_m} \sin\frac{j\pi}{k_m}$  and

 \[
 (\bar x_{k_m, j}-\bar  x_{k_m, 1})_1= \bar  r_{k_m}\cos\frac{2\pi j}{k_m}- \bar r_{k_m}= -2\bar  r_{k_m}\sin^2 \frac{\pi j}{k_m}=-\frac1{2\bar r_{k_m}} |\bar x_{k_m, j}-\bar x_{k_m, 1}|^2.
 \]
Therefore

\begin{equation}\label{12-31-5-21}
\begin{split}
  & Q(u_{k_m}, \xi_{k_m}, \delta)\\
 =& \frac{A_m B_m}{\bar \mu_{k_m}^{N-2}}\frac2{(N-2)^2\omega_{N-1}^2} (N-2)\omega_{N-1}\sum_{j=2}^{k_m} \frac{-(N-2)}{2\bar r_{k_m}|\bar x_{k_m, j}-\bar x_{k_m, 1}|^{N-2}}
  +O\bigl( \frac 1{(k_m\bar \mu_{k_m})^{2+\sigma}}\bigr).
 \end{split}
 \end{equation}

 Note that

 \[
 A:= \int_{\mathbb R^N} U^{2^*-1}= (N-2)\omega_{N-1} c_N,\quad B:=(2^*-1)\int_{\mathbb R^N} U^{2^*-2}\psi_0= -\frac{N-2}2 \int_{\mathbb R^N} U^{2^*-1}.
 \]
 Using  \eqref{2-31-5-21}, we obtain

\begin{equation}\label{12-31-5-21}
\begin{split}
   Q(u_{k_m}, \xi_{k_m}, \delta)
 =& \frac{b_0 AB}{\bar \mu_{k_m}^{2}k_m^2}\frac{2(N-2)\omega_{N-1} }
 {(N-2)^2\omega_{N-1}^2} \frac{ \frac{\Delta K(k_m^{-1}\bar x_{k_m, 1})}{2^*N} \int_{\mathbb R^N} U^{2^*} |y|^2}{r_{k_m}c_N \int_{\mathbb R^N} U^{2^*-1}}
  +o\bigl( \frac 1{k_m^2\bar \mu_{k_m}^{2}}\bigr)\\
  & =- \frac{b_0(N-2) \Delta K(k_m^{-1}\bar x_{k_m, 1}) \int_{\mathbb R^N} U^{2^*} |y|^2}{2^* Nr_{k_m} \bar \mu_{k_m}^{2}k_m^2}
  +o\bigl( \frac 1{k_m^2\bar \mu_{k_m}^{2}}\bigr).
 \end{split}
 \end{equation}
 So, we have proved

\begin{equation}\label{20-31-5-21}
   \begin{split}
 & -\int_{  B_\delta(\bar x_{k_m, 1})} u_{k_m}^{2^*-1} \xi_{k_m} \frac{\partial K_{k_m}(|y|) }{\partial y_1}\\
 =
 &- \frac{b_0(N-2) \Delta K(k_m^{-1}\bar x_{k_m, 1}) \int_{\mathbb R^N} U^{2^*} |y|^2}{2^* Nr_{k_m} \bar \mu_{k_m}^{2}k_m^2}
  +o\bigl( \frac 1{k_m^2\bar \mu_{k_m}^{2}}\bigr).
 \end{split}
 \end{equation}

 Now we estimate the left hand side of \eqref{20-31-5-21}.
 Since

 \[
 \nabla K_m( \bar x_{k_m, 1})= O(| k_m^{-1} |\bar x_{k_m, 1}|- r_0|)= O\bigl(\frac1{(k_m \bar \mu_{k_m})^{1+\sigma}}\bigr),
 \]
 we obtain

\begin{equation}\label{100-2-16-3}
   \begin{split}
 &\int_{  B_\delta(\bar x_{k_m, 1})} u_{k_m}^{2^*-1} \xi_{k_m} \frac{\partial K_{k_m}(|y|) }{\partial y_1}\\
 =& \int_{  B_\delta(\bar x_{k_m, 1})} u_{k_m}^{2^*-1} \xi_{k_m} \Bigl(\frac{\partial K_{k_m}(|y|) }{\partial y_1}- \frac{\partial K_{k_m}(\bar x_{k_m,1}) }{\partial y_1}
 \Bigr)+O\bigl(\frac1{(k_m \bar \mu_{k_m})^{1+\sigma}}\bigr)\\
 =& \int_{  B_\delta(\bar x_{k_m, 1})}  u^{2^*-1}_{k_m}\xi_m \Bigl( \Bigl\langle \nabla  \frac{\partial K_{k_m}(\bar x_{k_m, 1})}{\partial y_1}, y-\bar x_{k_m, 1}\Bigr\rangle
 +O\bigl(\frac1{(k_m \bar \mu_{k_m})^{1+\sigma}}\bigr)\\
 =&
 \int_{\mathbb R^N} U^{2^*-1} \bigl( b_{0}\psi_0 + b_{1} \psi_1\bigr)\Bigl( \Bigl\langle \nabla  \frac{\partial K(k_m^{-1}
 \bar x_{k_m, 1})}{\partial y_1}, \frac y{k_m\bar\mu_{k_m}}\Bigr\rangle+o\bigl(\frac1{k_m \bar \mu_{k_m}}\bigr)
 \\
 =& \frac{ K''(k_m^{-1}\bar x_{k_m, 1})   b_{ 1}}{k_m \bar \mu_{k_m}} \int_{\mathbb R^N} U^{2^*-1} \psi_1 y_1 + o\bigl(\frac1{k_m \bar \mu_{k_m}}\bigr).
 \end{split}
 \end{equation}

 Combining \eqref{20-31-5-21} and \eqref{100-2-16-3}, we obtain $b_{1}= 0$.

\textbf{ Step~2.}  Next, we use \eqref{1-26-2} for $\Omega=\mathbb R^N$ to obtain

\[
\int_{\mathbb R^N}  u_{k_m}^{2^*-1} \xi_{k_m}   \langle \nabla K_{k_m}(y), y\rangle
 = 0,
 \]
 which gives

\begin{equation}\label{3-17-3}
   \int_{\Omega_1}  u_{k_m}^{2^*-1} \xi_{k_m}   \langle \nabla K_{k_m}(y), y\rangle
 = 0,
 \end{equation}
where

\[
\Omega_j=\Bigl\{ y=(y',y'')\in\R^2\times \R^{N-2}:
 \Bigl\langle \frac {(y', 0)}{|y'|}, \frac{\bar x_{k_m, j}}{|\bar x_{k_m, j}|}\Bigr\rangle\ge \cos \frac{\pi}{k_m}\Bigr\}.
\]

On the other hand, we have

\begin{equation}\label{30-31-5-21}
\begin{split}
  & \int_{\Omega_1}  u_{k_m}^{2^*-1} \xi_{k_m}    \langle \nabla K_{k_m}(y), y\rangle= \int_{ B_\delta(\bar x_{k_m, 1})}  u_{k_m}^{2^*-1} \xi_{k_m}    \langle \nabla K_{k_m}(y), y\rangle
 + O\bigl( \frac 1{(k_m\bar \mu_{k_m})^{2+\sigma}}\bigr)\\
=& \int_{ B_\delta(\bar x_{k_m, 1})}  u_{k_m}^{2^*-1} \xi_{k_m}    \langle \nabla K_{k_m}(y), y- \bar x_{k_m, 1})
 \rangle+ r_{k_m} \int_{  B_\delta(\bar x_{k_m, 1})} u_{k_m}^{2^*-1} \xi_{k_m} \frac{\partial K_{k_m}(|y|) }{\partial y_1}\\
 &
 + O\bigl( \frac 1{(k_m\bar \mu_{k_m})^{2+\sigma}}\bigr)
 \end{split}
 \end{equation}
which gives

\begin{equation}\label{31-31-5-21}
\begin{split}
  &
 -r_{k_m} \int_{  B_\delta(\bar x_{k_m, 1})} u_{k_m}^{2^*-1} \xi_{k_m} \frac{\partial K_{k_m}(|y|) }{\partial y_1}\\
=& \int_{ B_\delta(\bar x_{k_m, 1})}  u_{k_m}^{2^*-1} \xi_{k_m}   \langle \nabla K_{k_m}(y), y- \bar x_{k_m, 1})
 \rangle
 + O\bigl( \frac 1{(k_m\bar \mu_{k_m})^{2+\sigma}}\bigr).
 \end{split}
 \end{equation}
But
\begin{equation}\label{4-17-3}
  \begin{split}
 &\int_{ B_\delta(\bar x_{k_m, 1})}  u_{k_m}^{2^*-1} \xi_{k_m}    \langle \nabla K_{k_m}(y), y- \bar x_{k_m, 1})
 \rangle\\
 = &\int_{ B_\delta(\bar x_{k_m, 1})}  u_{k_m}^{2^*-1} \xi_{k_m}   \langle \nabla K_{k_m}(y)-\nabla K_{k_m}(\bar x_{k_m, 1}
 ), y- \bar x_{k_m, 1})
 \rangle+ O\bigl( \frac 1{(k_m\bar \mu_{k_m})^{2+\sigma}}\bigr)\\
 = &\int_{ B_\delta(\bar x_{k_m, 1})}  u_{k_m}^{2^*-1} \xi_{k_m}   \langle \nabla^2 K_{k_m}(\bar x_{k_m, 1})(y-\bar x_{k_m, 1})
 ), y- \bar x_{k_m, 1})
 \rangle+ O\bigl( \frac 1{(k_m\bar \mu_{k_m})^{2+\sigma}}\bigr)\\
  = &\frac{ b_0 \Delta K(k_m^{-1}\bar x_{k_m, 1})}{N k_m^2\bar\mu_{k_m}^2}\int_{ \mathbb R^N}  U^{2^*-1}\psi_0  |y|^2+ o\bigl( \frac 1{
  k_m^2\bar \mu_{k_m}^{2}}\bigr)
 \end{split}
  \end{equation}

 Combining \eqref{20-31-5-21},  \eqref{31-31-5-21} and \eqref{4-17-3}, we obtain

  \begin{equation}\label{36-31-5-21}
  \begin{split}
 & - \frac{ b_0\Delta K(k_m^{-1}\bar x_{k_m, 1})}{N k_m^2\bar\mu_{k_m}^2}\int_{ \mathbb R^N}  U^{2^*-1}\psi_0  |y|^2\\
 =&
  \frac{b_0(N-2) \Delta K(k_m^{-1}\bar x_{k_m, 1}) \int_{\mathbb R^N} U^{2^*} |y|^2}{2^* N \bar \mu_{k_m}^{2}k_m^2}
  +o\bigl(\frac1{(k_m \bar \mu_{k_m})}\bigr).
  \end{split}
 \end{equation}

 Since

 \[
 \int_{ \mathbb R^N}  U^{2^*-1}\psi_0  |y|^2=-\frac2{2^*} \int_{\mathbb R^N} U^{2^*} |y|^2,
 \]
 and $  \frac2{2^*} \frac{ 1}{N }
 \ne
  \frac{N-2}{2^* N }$,  we see $b_0=0$.
\end{proof}

\begin{proof} [Proof of Theorem~\ref{th1-29-3}]
  We have
\begin{equation}\label{2-28-11}
\xi_{k_m} (y)= (2^*-1) \int_{\mathbb R^N}  \frac1{|z-y|^{N-2}} K_{k_m}(|z|)\bar u_{k_m}^{2^*-2}(z) \xi_{k_m}(z)\,dz.
\end{equation}

Now we estimate

\[
\begin{split}
&\Big|\int_{\mathbb R^N}  \frac1{|z-y|^{N-2}} K_{k_m}(|z|)\bar u_{k_m}^{2^*-2}(z) \xi_{k_m}(z)\,dz\Big|\\
\le  & C\|\xi _{k_m}\|_{*} \int_{\mathbb R^N} \frac1{|z-y|^{N-2}}\bar u_{k_m}^{2^*-2}(z) \sum_{j=1}^{k_m} \frac{\bar \mu_{k_m}^{\frac{N-2}2}}{ (1+ \bar \mu_{k_m} |z-\bar  x_{k_m, j}|)
^{\frac{N-2}2+\tau}}
\\
\le & C\|\xi_{k_m}\|_{*} \sum_{j=1}^{k_m} \frac{\bar \mu_{k_m}^{\frac{N-2}2}}{ (1+ \bar \mu_{k_m}|y-\bar  x_{k_m, j}|)
^{\frac{N-2}2+\tau+\theta}},
\end{split}
\]
for some $\theta>0$.
So we obtain
\[
\frac{ |\xi_{k_m}(y)|}{ \sum\limits_{j=1}^{k_m} \frac{\bar \mu_{k_m}^{\frac{N-2}2}}{ (1+\bar  \mu_{k_m}|y-\bar x_{k_m, j}|)
^{\frac{N-2}2+\tau}}
}\le C\|\xi_{k_m}\|_{*}\frac{ \sum\limits_{j=1}^{k_m} \frac{\bar \mu_{k_m}^{\frac{N-2}2}}{ (1+ \bar \mu_{k_m}|y-\bar x_{k_m, j}|)
^{\frac{N-2}2+\tau+\theta}}}{  \sum\limits_{j=1}^{k_m} \frac{\bar \mu_{k_m}^{\frac{N-2}2}}{ (1+ \bar \mu_{k_m}|y-\bar  x_{k_m, j}|)
^{\frac{N-2}2+\tau}}}.
\]

Since $\xi_{k_m}\to 0$ in $B_{R \bar \mu_{k_m}^{-1}}(\bar x_{k_m, j})$  and $\|\xi_{k_m}\|_*=1$, we know that
$$
\frac{ |\xi_{k_m}(y)|}{ \sum\limits_{j=1}^{k_m} \frac{\bar \mu_{k_m}^{\frac{N-2}2}}{ (1+ \bar \mu_{k_m}|y-\bar x_{k_m, j}|)
^{\frac{N-2}2+\tau}}
}
$$
attains its maximum in $\mathbb R^N \setminus \cup_{j=1}^{k_m} B_{R\bar  \mu_{k_m}^{-1}}(\bar x_{k_m, j})$. Thus
\[
\|\xi_{k_m}\|_* \le o(1)\|\xi_{k_m}\|_*.
\]
So $\|\xi_{k_m}\|_*\to 0$ as $m \to +\infty$. This is a contradiction to $\|\xi_{k_m}\|_*=1$.

\end{proof}

\section{Proof of the main result}
\setcounter{equation}{0}

Let  $u_k$ be the $k$-bubbling solutions in Theorem~A, where $k>0$ is a large even integer. Since $k$ is even, $u_k$ is even in each
$y_j$, $j=1,\cdots, N$.   Moreover,  $u_k$  is radial in $y''=(y_3, \cdots, y_N)$.

Let $n\ge k$ be a large even integer.
Set
\[
p_j=\Bigl(0, 0, t \cos\frac{2(j-1)\pi}n, t\sin\frac{2(j-1)\pi}n,0\Bigr),\quad j=1,\cdots,n,
\]
where   $t $ is close to $r_0$.

  Define
\[
\begin{split}
X_s=\Bigl\{ u: & u\in H_s, u\;\text{is even in} \;y_h, h=1,\cdots,N,\\
& u(y_1, y_2, t\cos\theta , t\sin\theta, y^*)=
u(y_1,y_2, t\cos(\theta+\frac{2\pi j}n) , t\sin(\theta+\frac{2\pi j}n), y^*)
\Bigr\}.
\end{split}
\]
Here $y^*=(y_5,\cdots,y_N)$.

Let
\[
D_j=\Bigl\{ y=(y',y_3, y_4, y^*)\in\R^2\times \mathbb R^2\times \R^{N-4}:
 \Bigl\langle \frac {(0, 0, y_3, y_4, 0,\cdots,0)}{|(y_3, y_4)|}, \frac{p_{ j}}{|p_{ j}|}\Bigr\rangle\ge \cos \frac{\pi}{n}\Bigr\}.
\]

Note that   both $u_k $  and $\sum_{j=1}^n U_{p_j, \lambda}$ belong to $ X_s$, while  $u_k$  and $\sum_{j=1}^n U_{p_j, \lambda}$ are separated  from each other. We aim to construct a solution for \eqref{1.4} of the form
\[
u= u_k +\sum_{j=1}^n U_{p_j,\lambda} +\xi,
\]
where $\xi\in X_s$ is a small perturbed term.

We define the linear operator
\begin{equation}\label{10-29-3}
 Q_n \xi= -\Delta \xi -(2^*-1)K(|y|)\Bigl(  u_k+\sum_{j=1}^n U_{p_j,\lambda}\Bigr)^{2^*-2}\xi,\quad \xi\in X_s.
 \end{equation}

Denote
 \[
Z_{j, 1}=\frac{\partial U_{p_j, \lambda }}{\partial r},\;\;j=1,\cdots,k,\quad Z_{j, 2}=\frac{\partial U_{p_j, \lambda }}{\partial \lambda}
  \]

 Let  $h_n\in X_s$. Consider
\begin{equation}\label{20-2-4}
\begin{cases}
Q_n \xi_n = h_n + \sum\limits_{i=1}^2 a_{n, i} \sum\limits_{j=1}^n  U_{p_j,\lambda}^{2^*-2}  Z_{j, i}, \\
\xi_n \in X_s,\\
\displaystyle\int_{\mathbb R^N} U_{p_j,\lambda}^{2^*-2} Z_{j, i} \xi_n=0,\;\;i=1, 2,\; j=1,\cdots, n,
\end{cases}
\end{equation}
for some constants $a_{n,i}$, depending on $\xi_n$.

We define

\[
\| u\|_{*,n}= \sup_{y\in \mathbb R^N} |u(y)|\Bigl(\sum_{j=1}^n \frac{\lambda^{\frac{N-2}2}}{ ( 1+ \lambda |y-p_j|)^{\frac{N-2}2+\tau}}\Bigr)^{-1},
\]
and
\[
\| f\|_{**,n}= \sup_{y\in \mathbb R^N} |f(y)|\Bigl(\sum_{j=1}^n \frac{\lambda^{\frac{N+2}2}}{ ( 1+ \lambda |y-p_j|)^{\frac{N+2}2+\tau}}\Bigr)^{-1},
\]
where $\tau=\frac{N-4}{N-2}$.

\begin{lemma}\label{l20-2-4}
Suppose that $N\ge 7$.
Assume that $\xi_n$ solve \eqref{20-2-4}.  If $\|h_n\|_{**,n }\to 0$, then $\|\xi_n\|_{*,n}\to 0$.
\end{lemma}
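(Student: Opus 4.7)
The natural approach is the usual contradiction argument for a linearised operator in Lyapunov--Schmidt theory, but with one new ingredient: Theorem~\ref{th1-29-3} is used to kill the piece of the limiting profile that is supported on the background $u_k$. Suppose, along a subsequence, $\|\xi_n\|_{*,n}=1$ while $\|h_n\|_{**,n}\to 0$. I would first control the multipliers $a_{n,i}$: pairing \eqref{20-2-4} with $Z_{j_0,i_0}$ and using the $X_s$-symmetry to reduce to $j_0=1$, the leading order of the pairing is a fixed non-singular $2\times 2$ matrix (the Gram pairing of $\partial_\mu U$ and $\partial_r U$ with themselves), while the right-hand side is dominated by $C\|h_n\|_{**,n}+o(1)\|\xi_n\|_{*,n}$, giving $a_{n,i}=o(1)$.

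The second step is a Green-function bootstrap. I would write
\[
\xi_n(y)=\int_{\R^N}\frac{(2^*-1)K(|z|)\bigl(u_k+\sum_{j=1}^n U_{p_j,\la}\bigr)^{2^*-2}\xi_n(z)+\tilde h_n(z)}{(N-2)\omega_{N-1}|y-z|^{N-2}}\,dz,
\]
with $\tilde h_n=h_n+\sum_i a_{n,i}\sum_j U_{p_j,\la}^{2^*-2}Z_{j,i}$, and divide by the weight $W_n(y)=\sum_j\la^{(N-2)/2}(1+\la|y-p_j|)^{-(N-2)/2-\tau}$. Since $N\ge 7$ forces $2^*-2\le 1$, the subadditive bound $(u_k+\sum U_{p_j,\la})^{2^*-2}\le u_k^{2^*-2}+(\sum U_{p_j,\la})^{2^*-2}$ splits the kernel into a pure-bubble piece, controlled by $\ep_R\|\xi_n\|_{*,n}W_n(y)$ outside $\cup_j B_{R/\la}(p_j)$ via the standard convolution estimate, and a $u_k$-piece localised on a fixed compact set. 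The supremum of $|\xi_n|/W_n$ must therefore be attained either near some $p_{j_0}$, or on that compact set. In the former case the blow-up $\tilde\xi_n(y)=\la^{-(N-2)/2}\xi_n(\la^{-1}y+p_{j_0})$ is uniformly bounded, converges in $C^1_{\mathrm{loc}}$ to $\xi_\infty$ solving $-\Delta\xi_\infty=(2^*-1)U^{2^*-2}\xi_\infty$, and the orthogonality in \eqref{20-2-4}, together with the $X_s$-symmetry, forces $\xi_\infty\equiv 0$.

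The remaining possibility is that $|\xi_n|/W_n$ concentrates on the bounded set where $u_k$ is supported; this is the point where Theorem~\ref{th1-29-3} enters in an essential way. On any compact $\Om\subset\R^N$ disjoint from a tubular neighbourhood of the limit circle of the $p_j$'s, $(\sum U_{p_j,\la})^{2^*-2}\to 0$ uniformly, so $\xi_n$ is locally bounded and equicontinuous, while a uniform $D^{1,2}(\R^N)$-bound follows from testing the equation against $\xi_n$ itself (the $\|\cdot\|_{*,n}$-bound and $N\ge 7$ supplying the needed $L^{2^*}$-control of the nonlinear term). A subsequence then satisfies $\xi_n\rightharpoonup\xi^*$ weakly in $D^{1,2}(\R^N)$ and strongly in $C^1_{\mathrm{loc}}$ away from the limit circle, with $L_k\xi^*=0$ and $\xi^*\in H_s$ (the symmetries being preserved in the limit). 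Theorem~\ref{th1-29-3} then gives $\xi^*\equiv 0$, closing the contradiction. The main obstacle I foresee is securing the uniform $D^{1,2}$-bound and properly identifying the weak limit as an $H_s$-element to which Theorem~\ref{th1-29-3} applies, because the bubble parameter $\la\sim n^{(N-2)/(N-4)}$ diverges and the bubble centres spread all around the circle, so one must show that this circle is removable for the limiting equation $L_k\xi^*=0$ and that no mass is lost in the weak passage to the limit.
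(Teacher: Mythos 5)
Your overall contradiction scheme is right, and your instinct (correctly flagged at the end of your write-up) about where the trouble lies is also right: the weak-limit step on the $u_k$-region does not close as stated, and this is precisely where the paper does something different. The paper never tries to pass to a weak limit and extract a global element of $\ker L_k$. Instead it first constructs, in the Appendix, the Green's function $G_k(y,x)$ of $L_k$ itself (Proposition~\ref{p1-25-10n}), and the non-degeneracy Theorem~\ref{th1-29-3} is used there, once, to show that $G_k$ exists in $H_s\cap D^{1,2}$ and satisfies $|G_k(y,x)|\le C\sum_{i,j}|y-B_iA_jx|^{2-N}$. With that in hand the a priori bound on $\|\xi_n\|_{*,n}$ is proved by writing, on $|x|\le R$, $\xi_n=\int G_k(y,x)\bigl[(2^*-1)K\{(u_k+\sum U_{p_j,\la})^{2^*-2}-u_k^{2^*-2}\}\xi_n+\tilde h_n\bigr]$, so the $u_k^{2^*-2}$ potential never appears as a source term there -- it has been absorbed into $L_k$ and hence into $G_k$. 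On $|x|\ge R$ the paper reverts to the Newtonian kernel and beats the $u_k^{2^*-2}\xi_n$ contribution by the explicit decay of $u_k$, yielding an extra factor $|x|^{-\sigma}$. No $D^{1,2}$-compactness argument is needed.

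The concrete gap in your version is the claim that ``a uniform $D^{1,2}(\R^N)$-bound follows from testing the equation against $\xi_n$.'' With $\|\xi_n\|_{*,n}=1$ and $\la\sim n^{(N-2)/(N-4)}$, $\tau=(N-4)/(N-2)$, one has $\|W_n\|_{L^{2^*}}^{2^*}\sim n$, and the testing identity
\[
\int_{\R^N}|\nabla\xi_n|^2=(2^*-1)\int_{\R^N}K\bigl(u_k+\textstyle\sum_jU_{p_j,\la}\bigr)^{2^*-2}\xi_n^2+\int_{\R^N}h_n\xi_n
\]
has a right-hand side that, by the local contribution near each $p_j$, is typically of size $\sim n$; so this gives at best $\|\xi_n\|_{D^{1,2}}\lesssim\sqrt n$, not a uniform bound. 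Without such a bound you only have local ($C^1_{\mathrm{loc}}$ away from the circle) compactness, and then to produce an $H_s\cap D^{1,2}(\R^N)$ element of $\ker L_k$ you would need to (i) rule out that the maximizing point drifts into the $p_j$-circle (otherwise $\xi^*$ could vanish identically without contradiction), (ii) prove the circle is removable for the limit equation, and (iii) show $\xi^*\in D^{1,2}$ by a decay estimate; none of these is addressed, and (i) in particular seems hard without an a priori estimate of the very type you are trying to prove. The paper's device -- encoding Theorem~\ref{th1-29-3} into the kernel bound for $G_k$ and then running a purely pointwise Green's function estimate -- bypasses exactly this obstruction.

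Two minor additional points. First, your bound for the multipliers $a_{n,i}$, obtained by pairing with $Z_{j,i}$, is consistent with what the paper implicitly uses (it falls out of the same Green's function estimate at the final step near $B_{L/\la}(p_1)$), but note the projection terms in \eqref{20-2-4} are $U_{p_j,\la}^{2^*-2}Z_{j,i}$, not $Z_{j,i}$ alone. Second, your case split implicitly assumes the sup of $|\xi_n|/W_n$ lives either at the $p_j$-scale or on the fixed $u_k$-region; you also need to dispose of intermediate/far regions, which the paper does via the $|x|\ge R$ estimate and the ratio $\frac{\sum_j(1+\la|y-p_j|)^{-(N-2)/2-\tau-\theta}}{\sum_j(1+\la|y-p_j|)^{-(N-2)/2-\tau}}$ being $o(1)$ there.
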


\begin{proof}

Suppose that there are $\xi_n$ and $h_n$, satisfying \eqref{20-2-4}, $\|h_n\|_{**,n }\to 0$ and $\|\xi_n\|_{*,n}=1$.
We write

\[
L_k \xi_n =  (2^*-1)K(|y|)\Bigl[\Bigl(  u_k+\sum_{j=1}^n U_{p_j,\lambda}\Bigr)^{2^*-2}-
u_k^{2^*-2}\Bigr]+ h_n + \sum\limits_{i=1}^2 a_{n, i} \sum\limits_{j=1}^n    Z_{j, i}
\]
Then,
\[
\begin{split}
\xi_n(x)
 =\int_{\mathbb R^N} G_k(y, x)\Bigl((2^*-1) K(|y|)\Bigl[&\Bigl(  u_k+\sum_{j=1}^n U_{p_j,\lambda}\Bigr)^{2^*-2}-
u_k^{2^*-2}\Bigr] \xi_n\\
&+h_n + \sum\limits_{i=1}^2 a_{n, i} \sum\limits_{j=1}^n  U_{p_j,\lambda}^{2^*-2}  Z_{j, i}\Bigr)\,dy.
\end{split}
\]

 For $|x|\le R$, by Proposition~\ref{p1-25-10n},

\[
\begin{split}
|\xi_n(x)| \le &C\int_{\mathbb R^N} G_k(y, x)\Bigl((\sum_{j=1}^n U_{p_j,\lambda}\Bigr)^{2^*-2}| \xi_n|+|h_n |+ |\sum\limits_{i=1}^2 a_{n, i} \sum\limits_{j=1}^n   U_{p_j,\lambda}^{2^*-2}  Z_{j, i}|\Bigr)\,dy\\
\le &C\int_{\mathbb R^N} \frac1{|y-x|^{N-2}}\Bigl((\sum_{j=1}^n U_{p_j,\lambda}\Bigr)^{2^*-2}| \xi_n|+|h_n |+ |\sum\limits_{i=1}^2 a_{n, i} \sum\limits_{j=1}^n   U_{p_j,\lambda}^{2^*-2}  Z_{j, i}|\Bigr)\,dy.
\end{split}
\]
Then, similar to the proof of Lemma~2.1 in \cite{WY}, we can prove

\begin{equation}\label{20-1-6-21}
\Bigl(\sum_{j=1}^n \frac{\lambda^{\frac{N-2}2}}{ ( 1+ \lambda |y-p_j|)^{\frac{N-2}2+\tau}}\Bigr)^{-1}|\xi_n(x)| \le C\|h_n\|_{**,n}+
\frac{ \sum_{j=1}^n \frac{C}{ ( 1+ \lambda |y-p_j|)^{\frac{N-2}2+\tau+\theta}} }{  \sum_{j=1}^n \frac{1}{ ( 1+ \lambda |y-p_j|)^{\frac{N-2}2+\tau}}}
\|\xi_n\|_{*,n},
\end{equation}
for some $\theta>0$.

Now, we discuss the case $|x|\ge R$.   We have

\[
\begin{split}
|\xi_n(x)| \le &C\int_{\mathbb R^N} \frac1{|y-x|^{N-2}}\Bigl((\sum_{j=1}^n U_{p_j,\lambda}\Bigr)^{2^*-2}| \xi_n|+|h_n |+ |\sum\limits_{i=1}^2 a_{n, i} \sum\limits_{j=1}^n    Z_{j, i}|\Bigr)\,dy\\
&+C\int_{\mathbb R^N} \frac1{|y-x|^{N-2}}u_k^{2^*-2}| \xi_n|.
\end{split}
\]
We have

\[
\int_{\mathbb R^N} \frac1{|y-x|^{N-2}}u_k^{2^*-2}| \xi_n|\le
\|\xi_n\|_{*, n} \int_{\mathbb R^N} \frac1{|y-x|^{N-2}}u_k^{2^*-2} \sum_{j=1}^n \frac{\lambda^{\frac{N-2}2}}{ ( 1+ \lambda |y-p_j|)^{\frac{N-2}2+\tau}}.
\]
Let  $d_j= \frac12 |x-p_j|$. Then, we have

\[
\begin{split}
&\int_{ B_{d_j}(p_j)} \frac1{|y-x|^{N-2}}u_k^{2^*-2}  \frac{\lambda^{\frac{N-2}2}}{ ( 1+ \lambda |y-p_j|)^{\frac{N-2}2+\tau}}\\
\le &
\frac{C}{   \lambda^\tau d_j^{N-2}}\int_{ B_{d_j}(p_j)}
 \frac{1}{ |y-p_j|^{\frac{N-2}2+\tau}}\frac1{(1+|y|)^4}\\
 \le &\frac{C}{   \lambda^\tau d_j^{\frac{N-2}2+\tau+2}}\le \frac{C\lambda^{\frac{N-2}2}}{ ( 1+ \lambda |x-p_j|)^{\frac{N-2}2+\tau}}\frac1{|x|^2},
\end{split}
\]
if $ N>6+2\tau $,

\[
\int_{ B_{d_j}(p_j)} \frac1{|y-x|^{N-2}}u_k^{2^*-2}  \frac{\lambda^{\frac{N-2}2}}{ ( 1+ \lambda |y-p_j|)^{\frac{N-2}2+\tau}}\\
\le
\frac{C\lambda^{\frac{N-2}2}}{ ( 1+ \lambda |x-p_j|)^{\frac{N-2}2+\tau}}\frac{\ln |x|}{|x|^2},
\]
if $ N=6+2\tau $,
  and
\[
\begin{split}
&\int_{ B_{d_j}(p_j)} \frac1{|y-x|^{N-2}}u_k^{2^*-2}  \frac{\lambda^{\frac{N-2}2}}{ ( 1+ \lambda |y-p_j|)^{\frac{N-2}2+\tau}}\\
\le&
\frac{C}{   \lambda^\tau d_j^{N-2}}
 \le  \frac{C\lambda^{\frac{N-2}2}}{ ( 1+ \lambda |x-p_j|)^{\frac{N-2}2+\tau}}\frac1{|x|^{\frac{N-2}2-\tau}},
\end{split}
\]
if $ N<6+2\tau $.

We also have
\[
\begin{split}
&\int_{\mathbb R^N\setminus B_{d_j}(x)} \frac1{|y-x|^{N-2}}u_k^{2^*-2}  \frac{\lambda^{\frac{N-2}2}}{ ( 1+ \lambda |y-p_j|)^{\frac{N-2}2+\tau}}\\
\le &
\frac{C\lambda^{\frac{N-2}2}}{ ( 1+ \lambda |x-p_j|)^{\frac{N-2}2+\tau}}\int_{\mathbb R^N\setminus  B_{d_j}(x)} \frac1{|y-x|^{N-2}}\frac1{(1+|y|)^4}
\\
\le &\frac{C\lambda^{\frac{N-2}2}}{ ( 1+ \lambda |x-p_j|)^{\frac{N-2}2+\tau}}\frac1{|x|^2}.
\end{split}
\]
So, we have that for $|x|\ge R$,

\begin{equation}\label{21-1-6-21}
\begin{split}
&\Bigl(\sum_{j=1}^n \frac{\lambda^{\frac{N-2}2}}{ ( 1+ \lambda |y-p_j|)^{\frac{N-2}2+\tau}}\Bigr)^{-1}|\xi_n(x)|\\
 \le &C\|h_n\|_{**,n}+
\frac{C}{|x|^\sigma}\|\xi_n\|_{*,n}
+
\frac{ \sum_{j=1}^n \frac{C}{ ( 1+ \lambda |y-p_j|)^{\frac{N-2}2+\tau+\theta}} }{  \sum_{j=1}^n \frac{1}{ ( 1+ \lambda |y-p_j|)^{\frac{N-2}2+\tau}}}
\|\xi_n\|_{*,n},
\end{split}
\end{equation}
for some $\sigma>0$ and $\theta>0$.

Combining \eqref{20-1-6-21} and \eqref{21-1-6-21}, we see that the maximum of the function in the left hand side of \eqref{20-1-6-21}
can only be achieved $B_{\lambda^{-1}L}(p_1)$ for some large constant $L>0$. Using the last relation in \eqref{20-2-4},
we will deduce a contradiction.

\end{proof}

 From now on, we assume that $N\ge 7$.
We want to construct a solution $u$ for \eqref{1.4}  with
\[
u= u_k +\sum_{j=1}^n  U_{p_j,\lambda} +\omega,
\]
where $\omega\in X_s$ is a small perturbed term, satisfying
\[
 \int_{\mathbb R^N}  U_{p_j,\lambda}^{2^*-2} Z_{j,l} \omega=0,\quad j=1,\cdots, n,\,l=1,2.
 \]
 Then $\omega$ satisfies
\begin{equation}\label{3-22-12}
  Q_n \xi_n = l_n+R(\xi_n),
 \end{equation}
where
\begin{equation}\label{4-22-12}
   l_n =K(|y|) \Bigl(  u_k+\sum_{j=1}^n U_{p_j,\lambda}\Bigr)^{2^*-1} - K(|y|) u_k^{2^*-1} -\sum_{j=1}^n U_{p_j,\lambda}^{2^*-1},
 \end{equation}
and
\begin{equation}\label{5-22-12}
 \begin{split}
   R_n(\xi) = & K(|y|) \Bigl(  u_k+\sum_{j=1}^n U_{p_j,\lambda}+\xi\Bigr)^{2^*-1}-K(|y|)\Bigl(  u_k+\sum_{j=1}^n U_{p_j,\lambda}\Bigr)^{2^*-1}\\
   &-(2^*-1) K(|y|)\Bigl(  u_k+\sum_{j=1}^n U_{p_j,\lambda}\Bigr)^{2^*-2}\xi.
   \end{split}
 \end{equation}

\begin{lemma}\label{l1-22-12}
\[
\|R_n(\xi)\|_{**,n}\le C\|\xi\|_{**,n}^{2^*-1}.
\]
Moreover,
\[
\|R_n(\xi_1)- R_n(\xi_2)\|_{**,n}\le C \Bigl(  \|\xi_1\|_{**,n}^{2^*-2} +\|\xi_2\|_{**,n}^{2^*-2}\Bigr)\|\xi_1-\xi_2\|_{**,n}.
\]

\end{lemma}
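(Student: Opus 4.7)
The plan is to reduce both inequalities to pointwise nonlinear bounds combined with the standard weight-comparison estimate that is routine in the Wei--Yan framework. Setting $W := u_k + \sum_{j=1}^n U_{p_j,\lambda}\ge 0$, one rewrites
\[
R_n(\xi) = K(|y|)\bigl[(W+\xi)^{2^*-1} - W^{2^*-1} - (2^*-1)W^{2^*-2}\xi\bigr].
\]
Note that the hypothesis $N\ge 7$ is used precisely to force the exponent $p := 2^*-1 = (N+2)/(N-2)$ to lie in $(1,2]$, i.e.\ sub-quadratic, so that only the one-sided Taylor control below is available.

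The first step is to apply the elementary pointwise inequalities, valid for $p\in(1,2]$ and $a,a+b,a+b_i\ge 0$,
\[
\bigl|(a+b)^p - a^p - p a^{p-1} b\bigr| \le C|b|^p,
\]
\[
\bigl|(a+b_1)^p - (a+b_2)^p - p a^{p-1}(b_1-b_2)\bigr| \le C\bigl(|b_1|^{p-1}+|b_2|^{p-1}\bigr)|b_1-b_2|,
\]
with $a=W$ and $b=\xi$ (respectively $b_i=\xi_i$). Combined with the boundedness of $K$, these immediately give
\[
|R_n(\xi)(y)| \le C|\xi(y)|^{2^*-1},\qquad |R_n(\xi_1)(y)-R_n(\xi_2)(y)| \le C\bigl(|\xi_1(y)|^{2^*-2}+|\xi_2(y)|^{2^*-2}\bigr)|\xi_1(y)-\xi_2(y)|.
\]

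The second step is to insert the definition of the weighted norms. From
\[
|\xi(y)|\le \|\xi\|_{*,n}\sum_{j=1}^n \frac{\lambda^{(N-2)/2}}{(1+\lambda|y-p_j|)^{(N-2)/2+\tau}},
\]
both inequalities reduce to a single pointwise weight-comparison estimate of the form
\[
\Bigl(\sum_{j=1}^n \frac{\lambda^{(N-2)/2}}{(1+\lambda|y-p_j|)^{(N-2)/2+\tau}}\Bigr)^{2^*-1}\le C\sum_{j=1}^n \frac{\lambda^{(N+2)/2}}{(1+\lambda|y-p_j|)^{(N+2)/2+\tau}},
\]
together with its obvious bilinear Lipschitz analogue. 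The exponents match automatically because $(2^*-1)\cdot(N-2)/2 = (N+2)/2$, and the residual gain $(2^*-1)\tau>\tau$ provides the decay margin needed after summation.

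I would verify this comparison by fixing $y$ in a region $D_i$ so that the $i$-th weight dominates; the surplus decay $(2^*-1)\tau-\tau$ then absorbs the remaining bubbles after summing against the angular separation $|p_i-p_j|\sim r_0 |\sin(\pi(i-j)/n)|$, which combined with $\lambda\sim n^{(N-2)/(N-4)}$ makes the tail convergent. The main obstacle, and the only real content of the lemma, is precisely this bookkeeping: one must verify that the powers of $n$ picked up by summing over the $n$ well-separated centers are fully absorbed by the corresponding powers of $\lambda$, a point where the sub-quadratic nature of $p$ (for $N\ge 7$) is essential, since otherwise the inequality $(\sum a_j)^p \le C\sum a_j^p$ would be false. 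The nonlinear bounds in the first step are standard, so I expect no conceptual difficulty beyond this weight analysis.
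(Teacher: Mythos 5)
Your strategy matches the paper's exactly: establish the pointwise Taylor bounds
\[
|R_n(\xi)|\le C|\xi|^{2^*-1},\qquad |R_n(\xi_1)-R_n(\xi_2)|\le C\bigl(|\xi_1|^{2^*-2}+|\xi_2|^{2^*-2}\bigr)|\xi_1-\xi_2|,
\]
and then pass to the weighted norms by the comparison estimate of Wei--Yan's Lemma~2.4 (the paper simply cites that lemma for this step). The weight-comparison calculation you sketch, using $(2^*-1)\cdot\frac{N-2}{2}=\frac{N+2}{2}$ and the surplus decay $(2^*-1)\tau-\tau$ together with the separation of the $p_j$'s and $\lambda\sim n^{(N-2)/(N-4)}$, is the right bookkeeping. (Two small points: on the right-hand side the correct norm is $\|\xi\|_{*,n}$, which you used, not $\|\xi\|_{**,n}$ as in the displayed statement; and $N\ge 7$ forces $p=2^*-1$ strictly below $2$ — already $N\ge 6$ gives $p\le 2$.)

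However, your explanation of \emph{where} the dimension restriction is used is incorrect. You write that the sub-quadratic exponent is ``essential'' in the weight-comparison step, ``since otherwise the inequality $(\sum a_j)^p\le C\sum a_j^p$ would be false.'' That inequality is false for every $p>1$ (take all $a_j$ equal), so it cannot distinguish $N\ge 7$ from $N\le 6$; the weight comparison never invokes it. The absorption of the sum over the $n$ bubbles relies on the angular separation $|p_i-p_j|\gtrsim |i-j|/n$ together with the scaling $\lambda\sim n^{(N-2)/(N-4)}$ and the choice $\tau=\frac{N-4}{N-2}$, and this goes through for any exponent $p>1$. The dimension restriction is used in the first, pointwise step: the bound $|(W+\xi)^p-W^p-pW^{p-1}\xi|\le C|\xi|^p$ holds precisely for $p\le 2$; if $p>2$ (i.e.\ $N\le 5$) an additional term of order $W^{p-2}\xi^2$ appears, coupling the bubbles $U_{p_j,\lambda}$ to $u_k$, and the clean estimate in terms of $\|\xi\|_{*,n}^{2^*-1}$ alone no longer follows. (The paper's stated restriction to $N\ge 7$ rather than $N\ge 6$ is inherited from other parts of the reduction, e.g.\ Lemma~3.2 and the estimate of $\|l_n\|_{**,n}$, not from this Taylor bound.)
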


\begin{proof}
 For $N\ge 7$, we have

 \[
| R_n(\xi)|\le C|\xi|^{2^*-1},
\]
and

\[
|R_n(\xi_1)- R_n(\xi_2)|\le C( |\xi_1|^{2^*-2}+|\xi_2|^{2^*-2})|\xi_1-\xi_2|.
\]
So we can prove  this lemma as in Lemma~2.4 in \cite{WY}.
\end{proof}

We have the following estimate for $\|l_n\|_{**,n}$.
\begin{lemma}\label{c4-l1-25-3}
There is a small $\sigma>0$, such that
\[
\|l_n\|_{**, n}\le \frac{C }{\lambda^{1+\sigma}}.
\]

\end{lemma}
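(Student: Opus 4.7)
The plan is to split $l_n$ additively as $l_n = I_1 + I_2$, with $U := \sum_{j=1}^n U_{p_j,\lambda}$ and
\begin{equation*}
I_1 := K(|y|)\bigl[(u_k+U)^{2^*-1} - u_k^{2^*-1} - U^{2^*-1}\bigr], \qquad I_2 := K(|y|)\,U^{2^*-1} - \sum_{j=1}^n U_{p_j,\lambda}^{2^*-1}.
\end{equation*}
The term $I_2$ coincides with the error term arising in the $n$-bubble construction of \cite{WY} (now with the regular polygon placed in the $(y_3,y_4)$-plane rather than in the $(y_1,y_2)$-plane, which changes nothing since $K$ is radial and the weights are translates of a single profile). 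Under assumption $(K)$, and once $t$ is chosen at distance $O(\lambda^{-1-\sigma'})$ from $r_0$ so that the first-order Taylor contribution of $K(|y|)-1$ is absorbed, the bound $\|I_2\|_{**,n}\le C\lambda^{-1-\sigma}$ is exactly the content of the corresponding lemma in \cite{WY}.

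For the new piece $I_1$, I would invoke the elementary inequality $|(a+b)^p - a^p - b^p|\le C(a^{p-1}b+ab^{p-1})$, valid for $a,b\ge 0$ and $p>1$, with $p=2^*-1$, to obtain the pointwise estimate
\begin{equation*}
|I_1(y)|\le C\bigl(u_k(y)^{2^*-2}\,U(y) + u_k(y)\,U(y)^{2^*-2}\bigr).
\end{equation*}
Two structural inputs drive the rest of the argument. First, because $k$ is fixed, $u_k$ is a fixed function satisfying $\|u_k\|_\infty\le C_k$ (by the same Green's function bootstrap as in Lemma~\ref{l1-26-4}) with polynomial decay $u_k(y)=O(|y|^{-(N-2)})$ at infinity; moreover the concentration points $\bar x_{k,i}$ of $u_k$ lie on the circle $|y|=r_k$ in the $(y_1,y_2)$-plane and the centres $p_j$ lie on the orthogonal $(y_3,y_4)$-circle, so $|p_j-\bar x_{k,i}|$ is bounded below by a positive constant, uniformly in $i,j$. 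Second, since $N\ge 7$ one has $2^*-2=4/(N-2)\le 1$, and hence the pointwise bound $U^{2^*-2}\le \sum_j U_{p_j,\lambda}^{2^*-2}$ holds.

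With these inputs, each of $\|u_k U^{2^*-2}\|_{**,n}$ and $\|u_k^{2^*-2}U\|_{**,n}$ is estimated region by region. Inside $D_j$ for $y$ in a fixed neighbourhood of $p_j$, a direct power count (using $U_{p_j,\lambda}^{2^*-2}(y)\le C\lambda^2(1+\lambda|y-p_j|)^{-4}$ and the weight $\lambda^{(N+2)/2}(1+\lambda|y-p_j|)^{-(N+2)/2-\tau}$ with $\tau=(N-4)/(N-2)$) gives
\begin{equation*}
\frac{u_k(y)\,U_{p_j,\lambda}(y)^{2^*-2}}{\lambda^{(N+2)/2}(1+\lambda|y-p_j|)^{-(N+2)/2-\tau}}\le C_k\lambda^{-(N-2)/2}(1+\lambda|y-p_j|)^{(N-6)/2+\tau},
\end{equation*}
and an analogous computation for the other term produces the factor $\lambda^{-2}(1+\lambda|y-p_j|)^{(6-N)/2+\tau}$; a short arithmetic check shows both are $O(\lambda^{-1-\sigma})$ for $N\ge 7$. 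The main obstacle is the bookkeeping in the two transition regimes: (a) near the boundary of $D_j$, where two neighbouring bubbles contribute comparably to the weight, which is handled by using that $\|\cdot\|_{**,n}$ sums over all $j$; and (b) the far field $|y|\to\infty$, where the decay $u_k(y)=O(|y|^{-(N-2)})$ compensates the slower decay of $U$ and of the weight. Combining the bound on $I_1$ with that on $I_2$ yields the claimed estimate $\|l_n\|_{**,n}\le C\lambda^{-1-\sigma}$.
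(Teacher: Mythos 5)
Your proposal is correct and follows essentially the same route as the paper: you split $l_n$ into the cross term $I_1$ and the pure $n$-bubble error $I_2$ (the paper's $J_2+J_3$), cite the corresponding lemma of \cite{WY} for $I_2$, and estimate the cross term by a region-by-region power count that exploits the uniform separation between the $k$-bubble and $n$-bubble concentration sets together with $2^*-2\le 1$. Your single symmetric inequality $|(a+b)^p-a^p-b^p|\le C(a^{p-1}b+ab^{p-1})$ is just a unified packaging of the two elementary inequalities the paper applies separately near and away from $p_1$, and the resulting power counting matches the paper's.
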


\begin{proof}

Write
\[
\begin{split}
l_n =& K(|y|)\Bigl(\bigl(  u_k+\sum_{j=1}^n U_{p_j,\lambda}\bigr)^{2^*-1} - u_k^{2^*-1} -\bigl(\sum_{j=1}^n U_{p_j,\lambda}\bigr)^{2^*-1}\Bigr) \\
&+\bigl(
\sum_{j=1}^n U_{p_j,\lambda}\bigr)^{2^*-1}-\sum_{j=1}^n U_{p_j,\lambda}^{2^*-1}\\
& +  \bigl( K(|y|)-1\bigr)\bigl(  \sum_{j=1}^n U_{p_j,\lambda}\bigr)^{2^*-1}\\
:= & J_1+ J_2+J_3.
\end{split}
\]

 Similar to the proof of Lemma~2.5  in \cite{WY}, we can prove

 \[
 \|J_2\|_{**,n},\;  \|J_3\|_{**,n}\le \frac{C }{\lambda^{1+\sigma}}.
\]

Without loss of generality, we assume that $y\in D_1$.
First, we study the case  $y\in D_1\cap B_{ \lambda^{-\frac12}}(p_1)$. We have
\[
 u_k\le C_1\le  C U_{p_1,\lambda}.
 \]
Thus,

\[
 U^{2^*-2}_{p_1, \lambda}\ge C'.
\]
Noting that $2^*-2<1$, we find that
\[
\begin{split}
|J_1|\le & C  \bigl( \sum_{j=1}^n U_{p_j,\lambda}\bigr)^{2^*-2}u_k  + u_k^{2^*-1}\le C\bigl( \sum_{j=1}^n U_{p_j,\lambda}\bigr)^{2^*-2}+C\\
\le& C   \sum_{j=1}^n U_{p_j,\lambda}^{2^*-2}\le \sum_{j=1}^n\frac{C\lambda^2}{(1+\lambda |y-p_j|)^4}
\\
\le &\frac C{\lambda^{1+\sigma}} \sum_{j=1}^n \frac{\lambda^{\frac{N+2}2}}{ ( 1+ \lambda |y-p_j|)^{\frac{N+2}2+\tau}},
\end{split}
\]
since

\[
\lambda^{\frac{N-2}2-1-\sigma} \ge \lambda^{\frac{N-2}2-2+\tau}\ge c_0(1+\lambda|y-p_j|)^{\frac{N-2}2-2+\tau}.
\]

 Now, we consider  $y\in D_1\setminus  B_{ \lambda^{-\frac12}}(p_1)$. We use

 \[
 |(1+t)^{2^*-1}-1-(2^*-1)t|\le C|t|^{2^*-1},
 \]
 to deduce

 \[
|J_1|\le  C u_k^{2^*-2}\sum_{j=1}^n U_{p_j,\lambda}   + C\bigl( \sum_{j=1}^n U_{p_j,\lambda}\bigr)^{2^*-1}.
\]
We also have

 \[
\begin{split}
& \bigl( \sum_{j=1}^n U_{p_j,\lambda}\bigr)^{2^*-1}\le U_{p_1, 1}^{2^*-1} + CU_{p_1, 1}^{2^*-2} \sum_{j=2}^n U_{p_j,\lambda}
+C\bigl( \sum_{j=2}^n U_{p_j,\lambda}\bigr)^{2^*-1}\\
\le &
C  \sum_{j=1}^n U_{p_j,\lambda}
+C\bigl( \sum_{j=2}^n U_{p_j,\lambda}\bigr)^{2^*-1}
\end{split}
\]
since $ U_{p_1, 1}\le C$ if  $y\in D_1\setminus  B_{ \lambda^{-\frac12}}(p_1)$.

It is easy to check that

\[
 \sum_{j=1}^n U_{p_j,\lambda}\le \frac C{\lambda^{1+\sigma}} \sum_{j=1}^n \frac{\lambda^{\frac{N+2}2}}{ ( 1+ \lambda |y-p_j|)^{\frac{N+2}2+\tau}}.
 \]

On the other hand,  from

\[
 \frac{1}{ ( 1+ \lambda |y-p_j|)^{N-2}}\le \frac{C}{ ( 1+ \lambda |y-p_1|)^{\frac{N-2}2+\frac{\tau}{2^*-1}}}
 \frac{1}{ (  \lambda |p_j-p_1|)^{\frac{N-2}2-\frac{\tau}{2^*-1}}}, \quad y\in D_1\setminus  B_{ \lambda^{-\frac12}}(p_1),
 \]
we find

 \[
\begin{split}
&\bigl( \sum_{j=2}^n U_{p_j,\lambda}\bigr)^{2^*-1}\le  \frac{C\lambda^{\frac{N+2}2}}{ ( 1+ \lambda |y-p_1|)^{\frac{N+2}2+\tau}} \bigl( \sum_{j=1}^n \frac{1}{ ( \lambda |p_1-p_j|)^{\frac{N-2}2-\frac\tau{2^*-1}}}\bigr)^{2^*-1}\\
\le & C\bigl(\frac n\lambda\bigr)^{\frac{N+2}2-\tau}\frac{\lambda^{\frac{N+2}2}}{ ( 1+ \lambda |y-p_1|)^{\frac{N+2}2+\tau}}
\le  C\bigl(\frac 1\lambda\bigr)^{\frac{N+2}{N-2}-\frac{2\tau}{N-2}}\frac{\lambda^{\frac{N+2}2}}{ ( 1+ \lambda |y-p_1|)^{\frac{N+2}2+\tau}}.
\end{split}
\]
It is easy to check that $\frac{N+2}{N-2}-\frac{2\tau}{N-2}= 1+\frac{4-2\tau}{N-2}>1$.  Thus, we have proved
\[
|J_1|\le
C  \sum_{j=1}^n U_{p_j,\lambda}\le \frac C{\lambda^{1+\sigma}} \sum_{j=1}^n \frac{\lambda^{\frac{N+2}2}}{ ( 1+ \lambda |y-p_j|)^{\frac{N+2}2+\tau}},
\]
which implies

\[
\|J_1\|_{**,n}\le \frac{C }{\lambda^{1+\sigma}}.
\]

\end{proof}

 We consider the following problem:
\begin{equation}\label{80-3-4}
\begin{cases}
Q_n \xi_n =  l_n + R_n (\xi_n)+ \sum\limits_{i=1}^2 a_{n, i} \sum\limits_{j=1}^n    U_{p_j,\lambda}^{2^*-2}  Z_{j, i}, \\
\xi_n \in X_s,\\
\displaystyle\int_{\mathbb R^N} U_{p_j,\lambda}^{2^*-2} Z_{j,l} \xi_n=0,\;\; j=1,\cdots, n,\,l=1,2.
\end{cases}
\end{equation}

Using Lemmas~\ref{l20-2-4}, \ref{c4-l1-25-3} and \ref{l1-22-12}, we can prove the following proposition in a standard way.
\begin{proposition}\label{p1-6-3}
There is an integer $n_0>0$, such that for each $n\ge n_0$  and  $(t,\lambda) \in (r_0-\delta,\, r_0+\delta)\times [\lambda_0 n^{\frac{N-2}{N-4}}, \lambda_1
 n^{\frac{N-2}{N-4}}
 ]$ ,  \eqref{80-3-4} has a solution $\xi_n$ for some constants
$a_{n, i}$.   Moreover,  $\xi_n$
is a $C^1$ map from $(r_0-\delta,\, r_0+\delta)\times  [\lambda_0 n^{\frac{N-2}{N-4}}, \lambda_1
 n^{\frac{N-2}{N-4}}
 ]$ to $X_s$,
and
\[
\|\xi_n\|_{**,n}\le  \frac{C }{\lambda^{1+\sigma}}
\]
for some $\sigma>0$.

\end{proposition}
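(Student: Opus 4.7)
The plan is to solve \eqref{80-3-4} by a standard Lyapunov--Schmidt/contraction mapping argument, building on the three preparatory lemmas already established in this section. First, I would upgrade Lemma~\ref{l20-2-4} from an a~priori statement to a bounded invertibility statement. Let $E_n$ denote the closed subspace of $X_s$ consisting of functions orthogonal to $\{U_{p_j,\lambda}^{2^*-2}Z_{j,l}\}_{j,l}$, equipped with $\|\cdot\|_{*,n}$. For every $h\in X_s$ with $\|h\|_{**,n}<\infty$ and satisfying the symmetry constraints, Fredholm theory combined with the uniqueness implied by Lemma~\ref{l20-2-4} produces a unique pair $(\xi,(a_{n,1},a_{n,2}))\in E_n\times\R^2$ solving the linear problem in \eqref{20-2-4}, and a further contradiction argument (identical in spirit to the proof of Lemma~\ref{l20-2-4}) gives a uniform bound
\[
\|\xi\|_{*,n}\le C\|h\|_{**,n},
\]
with $C$ independent of $n$ and of $(t,\lambda)\in(r_0-\delta,r_0+\delta)\times[\lambda_0 n^{(N-2)/(N-4)},\lambda_1 n^{(N-2)/(N-4)}]$. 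Denote this bounded linear inverse by $A_n$.

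Next I would reformulate \eqref{80-3-4} as the fixed point equation
\[
\xi=A_n\bigl(l_n+R_n(\xi)\bigr)=:\Phi_n(\xi)
\]
on the closed ball $B_n=\{\xi\in E_n:\|\xi\|_{*,n}\le C_1/\lambda^{1+\sigma}\}$, where $C_1$ is chosen large enough. By Lemma~\ref{c4-l1-25-3},
\[
\|A_n(l_n)\|_{*,n}\le C\|l_n\|_{**,n}\le C/\lambda^{1+\sigma}.
\]
By Lemma~\ref{l1-22-12}, for $\xi\in B_n$,
\[
\|A_n(R_n(\xi))\|_{*,n}\le C\|R_n(\xi)\|_{**,n}\le C\bigl(C_1/\lambda^{1+\sigma}\bigr)^{2^*-1},
\]
which for $N\ge 7$ and $n$ (hence $\lambda$) large is an order of magnitude smaller than $C_1/\lambda^{1+\sigma}$. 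Thus $\Phi_n(B_n)\subset B_n$ provided $C_1$ is large enough. The Lipschitz estimate in Lemma~\ref{l1-22-12} yields, for $\xi_1,\xi_2\in B_n$,
\[
\|\Phi_n(\xi_1)-\Phi_n(\xi_2)\|_{*,n}\le C\bigl(\|\xi_1\|_{*,n}^{2^*-2}+\|\xi_2\|_{*,n}^{2^*-2}\bigr)\|\xi_1-\xi_2\|_{*,n}\le \tfrac12\|\xi_1-\xi_2\|_{*,n},
\]
so $\Phi_n$ is a contraction on $B_n$. Banach's fixed point theorem then produces the desired $\xi_n\in B_n$ together with the Lagrange multipliers $a_{n,1},a_{n,2}$, and the estimate $\|\xi_n\|_{*,n}\le C/\lambda^{1+\sigma}$ (and the claimed $\|\cdot\|_{**,n}$ version after translating between the two norms) follows immediately from the fixed point identity.

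For the $C^1$ dependence on the parameters $(t,\lambda)$, I would appeal to the implicit function theorem applied to the map $F(\xi,t,\lambda)=\xi-\Phi_n(\xi;t,\lambda)$. At the solution, the partial derivative $D_\xi F=I-D\Phi_n(\xi_n)$ is invertible because the contraction estimate above gives $\|D\Phi_n(\xi_n)\|\le\tfrac12$. Since $l_n$, $R_n$, the approximate solutions $U_{p_j,\lambda}$, and the projection constants depend smoothly on $(t,\lambda)$, and $A_n$ depends continuously on the parameters through these data, one obtains that $\xi_n$ is a $C^1$ function of $(t,\lambda)$ with values in $X_s$ equipped with $\|\cdot\|_{*,n}$.

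The main obstacle is the first step, that is, promoting Lemma~\ref{l20-2-4} to a linear invertibility result with a uniform operator bound: existence of the inverse requires a Fredholm-type argument that the kernel of the projected operator reduces (by the symmetry restrictions defining $X_s$ and by the orthogonality conditions) to the trivial solution, while the uniform constant requires ruling out by contradiction any sequence of parameters along which the norm of $A_n$ blows up. Once this linear theory is in place, everything else is routine and parallels Lemma~2.4 and the corresponding reduction argument in \cite{WY}; in particular the exponent $2^*-1\le 2$ valid for $N\ge 7$ is exactly what makes $R_n$ a genuine perturbation of the linear part.
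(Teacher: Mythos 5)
Your proposal is correct and follows the same route the paper intends: the paper only says Proposition~\ref{p1-6-3} follows ``in a standard way'' from Lemmas~\ref{l20-2-4}, \ref{c4-l1-25-3} and \ref{l1-22-12}, and the standard way is precisely the Lyapunov--Schmidt contraction-mapping argument you carry out, with the bounded invertibility of the projected linear operator obtained from Lemma~\ref{l20-2-4} via a Fredholm-plus-contradiction argument. One small remark: the natural norm for $\xi_n$ in the conclusion is $\|\cdot\|_{*,n}$, which is what your fixed-point argument directly delivers, and the paper's writing of $\|\cdot\|_{**,n}$ in the statement appears to be a typographical slip.
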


Define
\[
I(u)=\frac12\int_{\R^N}  |Du|^2-\frac1{2^*}\int_{\R^N} K(|y|) |u|^{2^*}.
\]
Let
\[
F(t,\lambda)= I\Bigl( u_{k}+\sum_{j=1}^n U_{p_j,\lambda}
+\xi_{n}\Bigr).
\]
To obtain a solution of the form $ u_{k}+\sum_{j=1}^n U_{p_j,\lambda}
+\xi_{n}$, we just need to find a critical point for $F(t,\lambda)$ in $[r_0-\delta, r_0+\delta]\times [ \lambda_0 n^{\frac{N-2}{N-4}}, \lambda_1 n^{\frac{N-2}{N-4}}
 ]$.

\begin{proof}[Proof of Theorem~\ref{th11}]

 We have
\begin{equation}\label{60-22-12}
 \begin{split}
 F(t, \lambda)=&  I\Bigl( u_{k}+\sum_{j=1}^n U_{p_j,\lambda}
\Bigr)+ n  O\bigl(  \frac{1 }{\lambda^{2+\sigma}}\bigr).
\end{split}
 \end{equation}

 On the other hand,
\begin{equation}\label{1-20-4}
 \begin{split}
   I\Bigl( u_{k}+\sum_{j=1}^n U_{p_j,\lambda}
\Bigr)=&   I\Bigl( \sum_{j=1}^n U_{p_j,\lambda}\Bigr) +I( u_k)  +\frac12 \sum_{j=1}^n\int_{\mathbb R^N} K(|y|)u_k^{2^*-1} U_{p_j,\lambda}\\
&- \frac1{2^*} \int_{\mathbb R^N} K(|y|) \Bigl( \big(u_{k}+\sum_{j=1}^n U_{p_j,\lambda}\big)^{2^*}-\big(\sum_{j=1}^n U_{p_j,\lambda}\big)^{2^*}- u_k^{2^*}
\Bigr).
\end{split}
 \end{equation}

 It is easy to check
\[
 \int_{\mathbb R^N}K(|y|) u_k^{2^*-1} U_{p_j,\lambda}= O\Bigl(\frac1{\lambda^{\frac{N-2}2}}\Bigr).
 \]

  For $y\in \cup_{j=1}^n\bigl(D_j  \setminus   B_{ \lambda^{-\frac12}}(p_j)\bigr)$, we have
 
   \[
  \begin{split}
 &\Bigl| \big(u_{k}+\sum_{j=1}^n U_{p_j,\lambda}\big)^{2^*}- u_k^{2^*}-\big(\sum_{j=1}^n U_{p_j,\lambda}\big)^{2^*}\Bigr|\\
 \le & C u_k^{2^*-1} \sum_{j=1}^n U_{p_j,\lambda}+C\big(\sum_{j=1}^n U_{p_j,\lambda}\big)^{2^*}.
 \end{split}
  \]
 Thus,
  
  \[
  \begin{split}
 & \int_{\cup_{j=1}^n(D_j  \setminus   B_{ \lambda^{-\frac12}}(p_j))}\Bigl| \big(u_{k}+\sum_{j=1}^n U_{p_j,\lambda}\big)^{2^*}- u_k^{2^*}-\big(\sum_{j=1}^n U_{p_j,\lambda}\big)^{2^*}\Bigr|\\
 \le & C \int_{\mathbb R^N}u_k^{2^*-1} \sum_{j=1}^n U_{p_j,\lambda}+C\int_{\cup_{j=1}^n(D_j  \setminus   B_{ \lambda^{-\frac12}}(p_j))}\big(\sum_{j=1}^n U_{p_j,\lambda}\big)^{2^*}\\
 \le & \frac{Cn}{\lambda^{\frac{N-2}2}}+Cn\int_{D_1  \setminus   B_{ \lambda^{-\frac12}}(p_1)}\big(\sum_{j=1}^n U_{p_j,\lambda}\big)^{2^*}.
 \end{split}
  \] 
  
  For $y\in D_1  \setminus   B_{ \lambda^{-\frac12}}(p_1)$, 
  \[
  \begin{split}
 \big(\sum_{j=1}^n U_{p_j,\lambda}\big)^{2^*}
 \le & C\Bigl( \frac1{\lambda^{\frac{N-2}2}|y-p_1|^{N-2}}+\frac1{\lambda^{\frac{N-2}2-\tau}|y-p_1|^{N-2-\tau}}\Bigr)^{2^*}\\
 \le & C\Bigl( \frac1{\lambda^{N}|y-p_1|^{2N}
 }+\frac1{\lambda^{N-2^*\tau}|y-p_1|^{2N-2^* \tau}}\Bigr),
 \end{split}
  \]
  which gives
  
   \[
   \int_{D_1  \setminus   B_{ \lambda^{-\frac12}}(p_1)}\big(\sum_{j=1}^n U_{p_j,\lambda}\big)^{2^*}\\
\le  \frac{C
}{\lambda^{\frac{N}2}}+\frac{C
}{\lambda^{\frac{1}2(N-2^*\tau)}}
 \le \frac{C
}{\lambda^{\frac{N-2}2}}.
  \] 
  So we have proved 
\[
  \int_{\cup_{j=1}^n(D_j  \setminus   B_{ \lambda^{-\frac12}}(p_j))}
  K(|y|)\Bigl| \big(u_{k}+\sum_{j=1}^n U_{p_j,\lambda}\big)^{2^*}- u_k^{2^*}-\big(\sum_{j=1}^n U_{p_j,\lambda}\big)^{2^*}\Bigr|\\
 =  O\Bigl(\frac n{\lambda^{\frac{N-2}2
 }}\Bigr).
  \]

On the other hand, we have

\[
 \begin{split}
 & \int_{\cup_{j=1}^n(D_j  \cap   B_{ \lambda^{-\frac12}}(p_j))}
  K(|y|)\Bigl| \big(u_{k}+\sum_{j=1}^n U_{p_j,\lambda}\big)^{2^*}- u_k^{2^*}-\big(\sum_{j=1}^n U_{p_j,\lambda}\big)^{2^*}\Bigr|\\
 =  & n\int_{D_1  \cap   B_{ \lambda^{-\frac12}}(p_1)}
  K(|y|)\Bigl| \big(u_{k}+\sum_{j=1}^n U_{p_j,\lambda}\big)^{2^*}- u_k^{2^*}-\big(\sum_{j=1}^n U_{p_j,\lambda}\big)^{2^*}\Bigr|.
 \end{split}
  \]
  
  It holds
\[
  \int_{  D_1\cap   B_{ \lambda^{-\frac12}}(p_1)}
  K(|y|) u_k^{2^*}=O\Bigl(\frac1{\lambda^{\frac{N}2}}\Bigr),
  \]
and
\[
  \begin{split}
 & \int_{  D_1\cap   B_{ \lambda^{-\frac12}}(p_1)}
  K(|y|)\Bigl| \big(u_{k}+\sum_{j=1}^n U_{p_j,\lambda}\big)^{2^*}-\big(\sum_{j=1}^n U_{p_j,\lambda}\big)^{2^*}\Bigr|\\
  \le & C \int_{  D_1\cap   B_{ \lambda^{-\frac12}}(p_1)}\Bigl(
  \bigl(\sum_{j=1}^n U_{p_j,\lambda}\bigr)^{2^*-1}u_k+u_k^{2^*}\Bigr)\\
  \le &  C \int_{  D_1\cap   B_{ \lambda^{-\frac12}}(p_1)}
  \Bigl( U_{p_1,\lambda}^{2^*-1} +  \frac{\lambda^{\frac{N+2}2}}{(1+ \lambda|y-p_1|)^{(2^*-1)(N-2)(1-\tau_1)   }}\Bigr)+ \frac{C}{\lambda^{\frac{N}2}}\\
  \le & \frac{C}{\lambda^{\frac{N-2}2}},
 \end{split}
  \]
where $\tau_1=\frac{N-4}{(N-2)^2}$.

In conclusion, we obtain

\begin{equation}\label{2-20-4}
   I\Bigl( u_{k}+\sum_{j=1}^n U_{p_j,\lambda}
\Bigr)=   I\Bigl( \sum_{j=1}^n U_{p_j,\lambda}\Bigr) +I( u_k)  +O\bigl(\frac n{\lambda^{\frac{N-2}2}}\bigr).
 \end{equation}

Combining \eqref{60-22-12}  and \eqref{2-20-4}, and  proceeding  as in \cite{WY}, we obtain
\begin{equation}\label{3-20-4}
 \begin{split}
 F(t, \lambda)=&   I\Bigl( \sum_{j=1}^n U_{p_j,\lambda}\Bigr) +I( u_k)+ n  O\bigl(  \frac{1 }{\lambda^{2+\sigma}}\bigr)\\
 =& I(u_k)+nA+n\Bigl( \frac{B_1}{\lambda^2} + \frac{B_2}{\lambda^2} (\lambda r_0 -t)^2-  \frac{B_3 n^{N-2}}{\lambda^{N-2}}\Bigr)\\
 &+ n  O\Bigl( \frac{B_1}{\lambda^{2+\sigma}} + \frac{B_2}{\lambda^2} (\lambda r_0 -t)^3\Bigr),
\end{split}
 \end{equation}
  where $A = \frac12\int_{\mathbb R^N}  |\nabla U_{0, 1}|^2 -\frac1{2^*}\int_{\mathbb R^N} U_{0, 1}^{2^*}$, $B_1$, $B_2$  and $B_3$ are some positive constants,
 and $\sigma>0$ is a small constant.\\

Now  to find a critical point for $F(t,
\lambda)$, we just need to  proceed exactly as in \cite{WY}.
\end{proof}

\appendix

\section{the Green's functions}

 For any function $f$ defined in $\mathbb R^N$, we define its corresponding function $f^*\in H_s\cap D^{1,2}(\mathbb R^N)$ as follows.

 We first define $A_j$ as

 \[
 A_j z= \bigl( r \cos(\theta+\frac{2j \pi}k),  r \sin(\theta+\frac{2j \pi}k), z''),\quad j=1, \cdots, k,
 \]
 where $z= (z', z'')\in \mathbb R^N$, $z'= (r\cos\theta, r\sin \theta)\in \mathbb R^2$, $z''\in \mathbb R^{N-2}$, while

 \[
 B_i z= \bigl( z_1,\cdots, z_{i-1}, -z_i, z_{i+1},\cdots, z_N),\quad i=1, \cdots, N.
 \]
 Let

 \[
 \bar f(y)=\frac1 k\sum_{j=1}^k f(A_j y),
 \]
 and

 \[
 f^*(y) = \frac1{(N-1)}\sum_{i=2}^N \frac12 \bigl( \bar f(y)+ \bar f(B_i y)\bigr).
 \]
 Then $f^*\in H_s\cap D^{1,2}(\mathbb R^N)$.

Let

\begin{equation}\label{1-25-10n}
 L_k \xi= -\Delta \xi   -(2^*-1)K(|y|) u_k^{2^*-2}\xi.
 \end{equation}
 In this section, we discuss the Green's function of $L_k$. Since $\delta_x$ is not in $H_s\cap D^{1,2}(\mathbb R^N)$, we consider

\begin{equation}\label{2-25-10n}
 L_k u =  \delta_x^*,  \quad u\in H_s.
 \end{equation}
 The solution of \eqref{2-25-10n} is denoted as $G_k(y, x)$. Let us point out that

\[
  \delta_x^* =\frac1 {N-1}\sum_{i=2}^N \frac12 \Bigl( \frac1 k\sum_{j=1}^k \delta_{A_j x}+ \frac1 k\sum_{j=1}^k \delta_{B_i A_j x}\Bigr).
 \]

 \begin{proposition}\label{p1-25-10n}

 The solution $G_k(y, x)$ of \eqref{2-25-10n} satisfies

 \[
 |G_k(y, x)|\le  \frac1 {N-1}\sum_{i=2}^N \frac12 \Bigl( \frac1 k\sum_{j=1}^k \frac{C}{|y- A_j x|^{N-2}}+ \frac1 k\sum_{j=1}^k
 \frac{C}{|y- B_i A_j x|^{N-2}}\Bigr)
 \]
 for all $x\in B_R(0)$, where $R>0$ is any fixed large constant.

 \end{proposition}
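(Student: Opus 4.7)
The plan is to peel off an explicit Newtonian piece from $G_k$ and solve for a regular remainder. Set
\[
\tilde G(y,x):=\frac{1}{(N-2)\omega_{N-1}}\cdot\frac{1}{N-1}\sum_{i=2}^N\frac12\Bigl(\frac{1}{k}\sum_{j=1}^k\frac{1}{|y-A_jx|^{N-2}}+\frac{1}{k}\sum_{j=1}^k\frac{1}{|y-B_iA_jx|^{N-2}}\Bigr),
\]
which by construction lies in $H_s$ and satisfies $-\Delta\tilde G(\cdot,x)=\delta_x^*$ in $\mathcal{D}'(\R^N)$. Write
\[
G_k(\cdot,x)=\tilde G(\cdot,x)+\phi_x,\qquad\phi_x\in H_s\cap D^{1,2}(\R^N),
\]
so that $\phi_x$ must satisfy
\[
L_k\phi_x=(2^*-1)K(|y|)u_k^{2^*-2}(y)\,\tilde G(y,x).
\]
Lemma \ref{l1-26-4} yields $u_k^{2^*-2}\in L^{N/2}(\R^N)$, so the perturbation of $-\Delta$ in $L_k$ is compact on $H_s\cap D^{1,2}(\R^N)$. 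Combined with Theorem \ref{th1-29-3} (triviality of the kernel on $H_s\cap D^{1,2}$), the Fredholm alternative gives existence and uniqueness of $\phi_x$, while the right-hand side $K u_k^{2^*-2}\tilde G(\cdot,x)$ is in $L^{2N/(N+2)}(\R^N)$ for each fixed $x\in B_R(0)$.

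\textbf{Pointwise estimate for the remainder.} The main step is to prove $|\phi_x(y)|\le C\,\tilde G(y,x)$ with $C$ independent of $y$. Using the Newton representation,
\[
\phi_x(y)=\frac{1}{(N-2)\omega_{N-1}}\int_{\R^N}\frac{(2^*-1)K(|z|)u_k^{2^*-2}(z)}{|y-z|^{N-2}}\bigl(\phi_x(z)+\tilde G(z,x)\bigr)\,dz.
\]
Standard elliptic regularity gives $\phi_x\in L^\infty(\R^N)$ with the natural $D^{1,2}$ decay. Then I would bootstrap exactly as in the iteration in the proof of Lemma \ref{l1-26-4}: combining the decay $u_k^{2^*-2}(z)\le C\sum_j\bar\mu_k^2/(1+\bar\mu_k|z-\bar x_{k,j}|)^4$ with the convolution against $1/|y-z|^{N-2}$, each iteration improves the decay of $\phi_x$ by a fixed positive power. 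After finitely many rounds the decay of $\phi_x$ catches up with the multi-pole profile $\sum_j1/|y-A_jx|^{N-2}$ hidden in $\tilde G(\cdot,x)$, and the term $\phi_x(z)$ inside the convolution can be absorbed into the left-hand side.

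\textbf{Conclusion.} Once $|\phi_x(y)|\le C\,\tilde G(y,x)$ is established, the triangle inequality gives
\[
|G_k(y,x)|\le\tilde G(y,x)+|\phi_x(y)|\le(1+C)\,\tilde G(y,x),
\]
which is the claimed estimate after renaming the constant. Uniformity for $x\in B_R(0)$ follows because both the seed bound $\|K u_k^{2^*-2}\tilde G(\cdot,x)\|_{L^{2N/(N+2)}}$ and the pointwise envelope of $\tilde G(\cdot,x)$ on compact sets of $x$ are uniformly controlled.

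\textbf{Main obstacle.} The delicate point is the bootstrap in Step 2: one must track the $k$ distinct Newtonian centers $A_jx$ (and their reflections $B_iA_jx$) simultaneously and show that convolving $u_k^{2^*-2}$ against $1/|y-z|^{N-2}$ transforms each pole into a pole of the same strength without losing decay. This requires the same type of refined interaction estimate $\sum_j(1+|z-\bar x_{k,j}|)^{-\alpha}$ that drives Lemma \ref{l1-26-4} and the estimates of Section 2; the symmetrization of $\delta_x$ adds to the number of centers but does not change the essential mechanism, since $R$ is fixed and the $k$ images of $x$ remain in a compact set.
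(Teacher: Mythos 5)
Your high-level plan --- peel off an explicit singular piece, solve for a regular remainder in $H_s\cap D^{1,2}$ via Theorem~\ref{th1-29-3}, and bootstrap decay --- is the right one and is essentially what the paper does. The defect is that you stop the peeling too early. After subtracting only the symmetrized Newtonian kernel $\tilde G$, the remainder $\phi_x=G_k-\tilde G$ solves $-\Delta\phi_x=(2^*-1)Ku_k^{2^*-2}G_k$, whose right-hand side behaves like $|y-x|^{2-N}$ near $x$ (since $u_k$ is continuous there). A Riesz-potential computation then forces $\phi_x\sim |y-x|^{4-N}$ near $x$, so
\[
\int_{B_r(x)}|\nabla\phi_x|^2\ \sim\ \int_{B_r(x)}|y-x|^{6-2N}\,dy,
\]
which diverges once $N\ge 6$. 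Equivalently, the source $Ku_k^{2^*-2}\tilde G(\cdot,x)\sim |y-x|^{2-N}$ fails to be in $L^{2N/(N+2)}$ near $x$ whenever $N\ge 6$. So in the regime $N\ge 7$ relevant to the application, the decomposition $G_k=\tilde G+\phi_x$ cannot have $\phi_x\in D^{1,2}(\R^N)$, the Fredholm alternative on $D^{1,2}$ has nothing to act on, and your assertion that ``standard elliptic regularity gives $\phi_x\in L^\infty$'' fails at $y=x$.

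The paper's proof repairs exactly this by subtracting not one Newtonian layer but a finite ladder: $v_1=C_N/|y-x|^{N-2}$, then inductively $-\Delta v_l=(2^*-1)Ku_k^{2^*-2}v_{l-1}$ in $B_{2R}(0)$ with zero boundary data. Each iterate gains two powers, $0\le v_l\le C|y-x|^{2l-N}$, and after $i\ge N/2$ steps $v_i$ is bounded. Only then is $w:=G_k-\eta v^*$ (with $\eta$ a cutoff and $v^*$ the symmetrization of $v=\sum_{l\le i}v_l$) a solution of $L_k w=f$ with $f\in L^\infty\cap H_s$ compactly supported, so that $f$ does lie in the dual of $D^{1,2}$ and Theorem~\ref{th1-29-3} can be invoked. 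The uniform bound $|w|\le C\|f\|_{L^\infty}$ is then obtained by a normalized-sequence compactness/contradiction argument (not directly from elliptic regularity), and the Newtonian decay of $w$ at infinity follows from the integral representation. In short: you must iterate the subtraction of singular layers to depth of order $N/2$ before the non-degeneracy theorem becomes applicable; a single layer does not suffice for $N\ge 6$.
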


 \begin{proof}

 Let $v_1= \frac{C_N}{ |y-x|^{N-2}}$, which satisfies $-\Delta v_1= \delta_x$ in $\mathbb R^N$.  Let $v_2$ be the solution of

 \[
 \begin{cases}
 -\Delta v= (2^*-1)K(y) u_k^{2^*-2} v_1,& \text{in}\; B_{2R}(0),\\
 v=0,& \text{on}\; \partial B_{2R}(0).
 \end{cases}
 \]
 Then $v_2\ge 0$ and

 \[
  v_2(y)= \int_{B_{2R}(0)}G(z, y) (2^*-1) u_k^{2^*-2} K(y)v_1\le C \int_{B_R(0)}\frac1{|y-z|^{N-2}}\frac1{|z-x|^{N-2}}\,dz\le \frac{C}{|y-x|^{N-4}},
 \]
 where $G(z,y)$ is the Green's function of $-\Delta$ in $B_{2R}(0)$ with zero boundary condition.
 We can continue this process to find $v_i$, which is the solution of

  \[
 \begin{cases}
 -\Delta v= (2^*-1) u_k^{2^*-2}K(y) v_{i-1},& \text{in}\; B_{2R}(0),\\
 v=0,& \text{on}\; \partial B_{2R}(0).
 \end{cases}
 \]
 and satisfies

 \[
 \begin{split}
 0\le  v_i (y)=& \int_{B_{2R}(0)}G(z, y) (2^*-1) u_k^{2^*-2} K(y)v_{i-1}\\
 \le &C \int_{B_{2R}(0)}\frac1{|y-z|^{N-2}}\frac1{|z-x|^{N-2(i-1)}}\,dz
 \le \frac{C}{|y-x|^{N-2i}}.
 \end{split}
 \]

 Let $i$ be large so that $v_i\in L^\infty(B_{2R}(0))$.  Define

 \[
 v=\sum_{l=1}^i v_l,
 \]
and $w= G_k(y, x)- \eta v^*$, where $\eta(y)=\eta(|y|)\in C^\infty_0(B_{2R}(0))$,$\eta=1$ in $B_{\frac32 R}(0)$  and $0\le \eta\le 1$. We then have

\begin{equation}\label{1-1-6-21}
L_k w = f,
\end{equation}
where $f\in L^\infty\cap H_s$ and $f=0$ in $\mathbb R^N\setminus B_{2R}(0)$.  By Theorem~\ref{th1-29-3},  \eqref{1-1-6-21}
 has a solution $w\in H_s\cap D^{1, 2}(\mathbb R^N)$.

It remains to prove that $|w(y)|\le \frac{C}{|y|^{N-2}}$ as $|y|\to +\infty$.

First, we claim that $|w|\le C |f|_{L^\infty(\mathbb R^N)}$.
In fact, suppose that there are $f_m\in L^\infty\cap H_s$, $w_m$ satisfying \eqref{1-1-6-21}, with $|f|_{L^\infty(\mathbb R^N)}\to 0$
and $|w_m|_{L^\infty(\mathbb R^N)}=1$.  Then, $w_m\to w$ in $C^1_{loc}(\mathbb R^N)$, which satisfies $L_k w=0$. Hence $w=0$. On the other hand,
we have

\begin{equation}\label{2-1-6-21}
w_m(y) =C_N \int_{\mathbb R^N}\frac1{|z-y|^{N-2} }(2^*-1) K(y) u_k^{2^*-2} w_m\,dz+  C_N \int_{\mathbb R^N}\frac1{|z-y|^{N-2} }
f_m
\,dz.
\end{equation}
Thus,  $|w_m(y)|\le \frac{C}{|y|^2}$ as $|y|\to +\infty$. This is a contradiction to $|w_m|_{L^\infty(\mathbb R^N)}=1$.

Now, $w$ is bounded, we obtain from \eqref{2-1-6-21} that  $|w(y)|\le \frac{C}{(1+|y|)^2}$. Then,

\[
|w(y)|\le C\int_{\mathbb R^N}\frac1{|z-y|^{N-2} } u_k^{2^*-2}\frac{C}{(1+|y|)^2} +\frac{C}{(1+|y|)^{N-2}}\le \frac{C}{(1+|y|)^4}.
\]
We can repeat this process to prove $|w(y)|\le \frac{C}{|y|^{N-2}}$ as $|y|\to +\infty$.
 \end{proof}

\bigskip
\noindent\textbf{Acknowledgements} \,\,\,Y. Guo was supported by NNSF of China (No. 11771235). M. Musso was supported by EPSRC research grant EP/T008458/1. S. Peng was supported by NNSF of China (No. 11571130, No. 11831009). S. Yan was supported by NNSF of China (No. 11629101).

\end{document}